\numberwithin{equation}{section}
\renewcommand\d{\partial}
\newcommand\dD{\textrm{d}}
\newcommand\eD{\textrm{e}}
\newcommand\iD{\textrm{i}}
\newcommand{\Id}{{\rm Id}}
\newcommand{\I}{{\rm I}}
\newcommand{\intl}{[\![}
\newcommand{\intr}{]\!]}
\newcommand\br{\begin{remark}}
\newcommand\er{\end{remark}}
\newcommand\bp{\begin{pmatrix}}
\newcommand\ep{\end{pmatrix}}
\newcommand{\be}{\begin{equation}}
\newcommand{\ee}{\end{equation}}
\newcommand\ba{\begin{equation}\begin{aligned}}
\newcommand\ea{\end{aligned}\end{equation}}
\newcommand{\beg}{\begin{example}}
\newcommand{\eeg}{\end{exaplem}}
\newcommand{\bpr}{\begin{proposition}}
\newcommand{\epr}{\end{proposition}}
\newcommand{\bt}{\begin{theorem}}
\newcommand{\et}{\end{theorem}}
\newcommand{\bc}{\begin{corollary}}
\newcommand{\ec}{\end{corollary}}
\newcommand{\bl}{\begin{lemma}}
\newcommand{\el}{\end{lemma}}
\newcommand{\bd}{\begin{definition}}
\newcommand{\ed}{\end{definition}}
\newcommand{\brs}{\begin{remarks}}
\newcommand{\ers}{\end{remarks}}
\newtheorem{theorem}{Theorem}[section]
\newtheorem{proposition}[theorem]{Proposition}
\newtheorem{corollary}[theorem]{Corollary}
\newtheorem{lemma}[theorem]{Lemma}
\theoremstyle{remark}
\newtheorem{remark}[theorem]{Remark}
\theoremstyle{definition}
\newtheorem{definition}[theorem]{Definition}
\newtheorem{example}[theorem]{Example}
\newcommand\R{\mathbf R}
\newcommand\C{\mathbf C}
\newcommand{\N}{\mathbf N}
\newcommand{\Z}{\mathbf Z}
\newcommand{\tq}{\tilde q}
\newcommand{\uad}{\uu^{\textrm{ad}}}
\newcommand{\Vad}{V^{\textrm{ad}}}
\newcommand\bB{{\mathbf B}}
\newcommand\bD{{\mathbf D}}
\newcommand\bJ{{\mathbf J}}
\newcommand\bM{{\mathbf M}}
\newcommand\bfdelta{{\mathbf \delta}}
\newcommand\ubM{{\underline \bM}}
\newcommand\uE{{\underline E}}
\newcommand\uM{{\underline M}}
\newcommand\uU{{\underline U}}
\newcommand\uc{{\underline c}}
\newcommand\uk{{\underline k}}
\newcommand\uup{{\underline p}}
\newcommand\uu{{\underline u}}
\newcommand\uom{{\underline \omega}}
\newcommand\cB{{\mathcal B}}
\newcommand\cC{{\mathcal C}}
\newcommand\cF{{\mathcal F}}
\newcommand\cH{{\mathcal H}}
\newcommand\cL{{\mathcal L}}
\newcommand\cO{{\mathcal O}}
\newcommand\tbD{\widetilde{\bD}}
\title{
Spectral validation of the Whitham equations for periodic waves of lattice dynamical systems
}
\author{Bu\u{g}ra Kabil}
\address{Bu\u{g}ra Kabil, 
Institute of Applied Analysis and Numerical Simulation, Pfaffenwaldring 57, 70569 Stuttgart, University of Stuttgart, Germany}
\email{{\tt Bugra.Kabil@mathematik.uni-stuttgart.de}}
\author{L.Miguel Rodrigues}
\address{L.Miguel Rodrigues,
Universit\'e de Lyon,
CNRS UMR 5208,
Universit\'e Lyon 1,
INRIA \'EP Kaliffe,
Institut Camille Jordan,
43 bd 11 novembre 1918;
F-69622 Villeurbanne cedex}
\email{{\tt Rodrigues@math.univ-lyon1.fr}}
\thanks{Research of L.M.R. was partially supported by the ANR project BoND ANR-13-BS01-0009-01.}
\begin{document}

\begin{abstract}
In the present contribution we investigate some features of dynamical lattice systems near periodic traveling waves. First, following the formal averaging method of Whitham, we derive modulation systems expected to drive at main order the time evolution of slowly modulated wavetrains. Then, for waves whose period is commensurable to the lattice, we prove that the formally-derived first-order averaged system must be at least weakly hyperbolic if the background waves are to be spectrally stable, and, when weak hyperbolicity is met, the characteristic velocities of the modulation system provide group velocities of the original system. Historically, for dynamical evolutions obeying partial differential equations, this has been proved, according to increasing level of algebraic complexity, first for systems of reaction-diffusion type, then for generic systems of balance laws, at last for Hamiltonian systems. Here, for their semi-discrete counterparts, we give at once simultaneous proofs for all these cases. Our main analytical tool is the discrete Bloch transform, a discrete analogous to the continuous Bloch transform. Nevertheless, we needed to overcome the absence of genuine space-translation invariance, a key ingredient of continuous analyses.
\end{abstract}

\date{\today}
\maketitle

%\begin{center}
{\it Keywords}: periodic traveling waves; Whitham averaging; modulation systems; lattice dynamical systems; spectral stability ; discrete Bloch transform.
%\end{center}

%\begin{center}
{\it 2010 MSC}:  34K13, 34K31, 35B10, 35B27, 35B35, 35P05, 37K60, 37L60, 39A23, 39A30.

%%%%%%%%%%%%%%%%%%%%%%%%%%%%%%%%%%%%%%%%%%%%%%%%%%%%%%
%\end{center}

%\clearpage
\tableofcontents
%\clearpage

%%%%%%%%%%%%%%%%%%%%%%%%%%%%%%%%%%%%%%%%%%%%%%%%%%%%%%         
%       INTRODUCTION                                 %
%%%%%%%%%%%%%%%%%%%%%%%%%%%%%%%%%%%%%%%%%%%%%%%%%%%%%%

\section{Introduction}\label{s:introduction}

A common strategy to tackle the analysis of large time behavior of dynamical systems focuses on coherent structures, often playing the role of organizing centers for the time evolution. We follow here this line of investigation for lattice dynamical systems and restrict our attention to periodic traveling waves.

By definition, evolution of lattice dynamical systems is continuous in time but discrete in space. We consider here particular instances of those where the lattice is one-dimensional and thus may be assumed to be $\Z$, and the evolution obeys a differential equation 
$$
\dfrac{\dD}{\dD t}U\ (t)\ =\ \cF(U(t))
$$
where the unknown $U$ associates to any time $t\in\R$ an $\R^d$-valued sequence $U(t)\in (\R^d)^\Z$ ($d\in\N^*$ being a fixed dimension) and $\cF$ is a vector-field on $(\R^d)^\Z$ that preserves the linear space of finitely-supported sequences and acts smoothly on it. Alternatively one may view the evolution as given by an infinite number of differential equations on $\R^d$-valued functions coupled by a finite-range interaction. We expect that our results could actually deal with nonlocalized interaction satisfying certain short-range decaying assumptions but we choose to ignore those here to keep technicalities as low as possible. 

There are at least two ways in which such systems arise. They may come directly from modeling at a discrete level, as for neural networks, granular media, crystals, biological molecules, optical waveguides, chains of coupled oscillators... Alternatively they may emerge from the discretization in space --- often called semi-discretization --- of systems of partial differential equations, that generate fully continuous dynamical systems, 
%see Figure \ref{chainSk} for an illustration.
%\input{chain} 
In the latter case, mark that although a study of dynamical evolutions generated by fully discrete schemes including boundary conditions would be of much more direct practical use, such a study appears as a daunting task and it is a common belief that still relevant pieces of information are obtained from investigation of extended semi-discretized schemes. The reader looking for panoramas on lattice dynamical systems is referred to \cite{Chow-MalletParet,Chow-MalletParet-vanVleck,MalletParet_survey,Pankov,Kevrekidis_review}. Specially designed technical tools of wide application range may be found in \cite{Rustichini_linear,Rustichini_Hopf,Chow-MalletParet-Shen,MalletParet_Fredholm,MalletParet_global-structure,Iooss,Kapitula-Kevrekidis,Cramer-Latushkin,James_center-manifold,Friesecke-Pego-I,Friesecke-Pego-II,Friesecke-Pego-III,Friesecke-Pego-IV,Hupkes-VerduynLunel_center-manifold,Hupkes-VerduynLunel_center-manifold_periodic,Hupkes-VerduynLunel_bifurcations}. Especially the latter references include key-ingredients to prove existence of traveling waves for various class of systems, a fact that is taken as an assumption in the present piece of work.

We are interested here in a particular area of the dynamics, namely what occurs in neighborhoods of periodic traveling waves. Compared to other coherent structures of lattice dynamical systems --- such as fronts, kinks, pulses, shocks, solitary waves or breathers ---, periodic waves have received relatively less attention. In particular, while the stability analysis of some of the former patterns seems to have reached some maturity --- see for instance \cite{MalletParet_global-structure,Friesecke-Pego-I,Friesecke-Pego-II,Friesecke-Pego-III,Friesecke-Pego-IV,BenzoniGavage-Huot-Rousset,Beck-Hupkes-Sandstede-Zumbrun,Hupkes-Sandstede-III} ---, the authors of the present paper are not aware of even a single example of a comprehensive stability study for periodic waves\footnote{In a sense analogous to \cite{Schneider-SH,Schneider-proc,Mielke-Schneider-Uecker,JNRZ-conservation,R} for dissipative \emph{continuous} systems.} of some lattice dynamical system. Nevertheless, the interested reader may benefit from looking at \cite{Keener,Tarollo-Terracini,Bressloff-Rowlands,Wu-Zou,Filip-Venakides,Pankov-Pfluger,Iooss-Kirchgassner,Feckan-Rothos_homoclinics,Pankov,Carpio,Feckan-Rothos_kink,Bak,Guo-Lamb-Rink,Makita,Venney-Zimmer,Herrmann_discrete-scalar-conservation-laws,James,Betti-Pelinovsky,Lin-Huang-Cheng}, that are at least partly dedicated to periodic waves of lattice dynamical systems, and mostly focusing on proving their existence.

To set our precise framework, let us specify that we say that, for systems as above, a traveling wave, sometimes called a uniformly sliding solution, --- that is, a solution $U$ with special form $U(t)=(u(j-ct))_{j\in\Z}$ for some speed $c\in\R$ and some profile $u:\R\to\R^d$ --- is said to be periodic if the corresponding profile $u$ is itself periodic. In this case we rescale phase variable of the profile to ensure that the profile period is one. Explicitly any periodic traveling wave $U$ is written as $U(t)=(u(k\,j+\omega\,t))_{j\in\Z}$ with a one-periodic profile $u$. This brings out spatial wavenumber $k\in\R^*$ and time frequency $\omega=-k\,c$. Mark that up to now we are not assuming that the spatial wavenumber $k$ is rational. When its is not the case the spatial periodicity of the profile is not a priori easily observed on the solution itself, though when standing waves are discarded, that is when restricting to $c\neq0$ this periodicity is apparent in time, the solution being $1/\omega$-periodic in time. We refer the reader to \cite{Herrmann_discrete-scalar-conservation-laws} for further elaboration on this comment including fascinating illustrating pictures. This already stresses that, in contrast with analogous issues for continuous dynamical systems, analysis of the behavior near standing waves follows from different kind of arguments. Indeed, while profile equations for genuinely-traveling waves are \emph{differential} functional equations, profile equations for standing waves are \emph{algebraic} functional equations. This results in the fact that even when traveling and standing waves coexist the standing-wave limit $c\to0$ is very singular. For this reason, our analysis focused on on genuinely-traveling waves do not extend in a straightforward way neither to breathers nor to steady spatially periodic solutions.

Our goals are, first, to derive --- on a formal basis --- a system of partial differential equations that is expected to describe the time evolution of slowly modulated wavetrains of lattice dynamical systems, second, to elucidate --- on mathematical grounds ---  what are, at the spectral level, the connections between the linearization about a constant state of the formally-derived modulation systems and the linearization about a periodic wave of our initial lattice system of equations. Our formal derivation is in many ways very classical and it is well-known as one of the possibilities to recover a continuous description from a discrete model. The reader is referred to \cite{Hays-Levermore-Miller,Hays-Levermore-Miller_erratum,Dreyer-Herrmann-Mielke,Dreyer-Herrmann-Rademacher,Giannoulis-Hermann-Mielke,Dreyer-Herrmann} for various perspectives on this point of view. Our main motivation here is to provide a rigorous spectral validation, that should be compared with some contributions of \cite{DSSS,Serre,Noble-Rodrigues,Benzoni-Noble-Rodrigues} on partial differential equations, essentially as \cite{Friesecke-Pego-I,Friesecke-Pego-II,Friesecke-Pego-III,Friesecke-Pego-IV} may be thought as a discrete counterpart of \cite{Pego-Weinstein-eigenvalues,Pego-Weinstein-asymptotic-stability}.

As a direct consequence of our results stems the proof that characteristic speeds of averaged systems do provide group velocities for the original one. Mark that even if we illustrate our strategy mostly on systems obtained from semi-discretization of continuous models we provide a proof at the discretized level so that, in these particular cases, group velocities implicitly depend on the mesh size and of the particular choice of discretization. Given the fundamental role of a precise knowledge of group velocities in a quantitative description of the dynamics, and since accurate reproduction of dispersion relations obtained by linearizing about constant states has served for a long time now to discriminate between numerical schemes with, from other respects, similar performances, we do not exclude that in a near future comparison between discrete group velocities and continuous ones could serve similar purposes.

Our contribution may seem relatively modest in that the proved connection is purely at the spectral level. Yet on the other hand our strategy is very robust as we emphasize by treating here in a single place various classes systems that were considered at the continuous level in separate contributions. Moreover, as our proofs provide relations for eigenmodes --- including eigenvectors and not restricting to eigenvalues ---, we expect that it could be directly useful for a dynamical validation of averaged systems in the large-time limit, as, in continuous cases, \cite[Section~4.2]{DSSS} for \cite{SSSU,JNRZ-RD2}, \cite{Noble-Rodrigues} for \cite{JNRZ-conservation} or \cite{Benzoni-Noble-Rodrigues} for \cite{R_linKdV}.

From our point of view, our main restriction is that in the spectral validation we only consider periodic waves whose period is a multiple of the generator of the lattice. This stringent restriction does not fit well in the spirit of slowly modulated wavetrains that involve a continuum of such periodic waves but it enables us to use techniques originating in the study of differential operators with periodic coefficients --- the classical Floquet theory --- rather than those for quasi-periodic coefficients. For a glimpse at the fascinating but technically daunting quasi-periodic theory the reader is referred to \cite{Eliasson_I,Eliasson_II,Eliasson_review,Eliasson_discrete-review} where is analyzed the spectrum of operators similar to those obtained by linearizing discrete equations about a general spatially-periodic stationary solution.

From a technical point of view, in our analysis of lattice systems, departures from the continuous strategy originate in the loss of a genuine space-translation invariance of the original system --- see Remark~\ref{space-translation-invariance} though. To balance this, we make an extensive use of the remaining time-translation invariance and of the space-translation invariance of the profile equation. In particular, instead of moving to a mobile frame in which the wave is at rest and analyzing the spectrum of the generator of the linearized dynamics, we study directly Floquet multipliers of the linearized time evolution, that is, we study the spectrum of the map that encodes the evolution over a time $1/\omega$ according to the linearized dynamics. Likewise, when dealing with Hamiltonian systems, we replace in our arguments the missing momentum conservation law with local conservation of energy. On the other hand, we may still use a normal, Floquet exponent by Floquet exponent, Bloch-wave decomposition by relying upon the discrete Bloch transform. Then to examine the spectrum of resulting discrete Bloch symbols we perform here a direct spectral perturbation analysis "\`a la Kato" \cite{Kato} as in \cite{Noble-Rodrigues,Benzoni-Noble-Rodrigues} for continuous systems.  Alternatively we could also have introduced a suitable Evans function, as done in \cite{Serre} for continuous dynamics and in \cite{Kapitula-Kevrekidis,Benzoni-Gavage,Cramer-Latushkin} for other classes of coherent structures in lattice systems. For a detailed discussion of respective advantages of proofs by direct spectral perturbation or by Evans function expansions see \cite{R}.

The rest of the present paper is organized as follows. In the second section, examining formal expansions, we derive modulated equations. In the third section we introduce tools from functional analysis --- including the discrete Bloch transform --- that we shall subsequently use to analyze the spectrum of the linear one-period evolution operator. In Section~\ref{s:spectrum} we state and prove our main results concerning the spectral validation of the previously obtained averaged systems. We close this paper with further comments and open questions. In Sections \ref{s:Whitham} \& \ref{s:spectrum}, we analyze three classes of systems in increasing order of algebraic complexity. First, we consider a lattice system of reaction-diffusion type, in the sense that the system does not support any local conservation law. In this case, the averaged evolution obeys a scalar equation and both derivation and spectral validation are considerably simpler so that we also provide high-order versions of those. It is actually those higher-order versions that are needed in the dynamical large-time validation of the slow modulation scenario. After that, we examine a general system that includes built-in conservation laws. The averaged dynamics is now prescribed by a genuine system and the spectral validation includes spectral perturbation of a Jordan block, hence requires a preliminary desingularization. At last, we consider a lattice system of Hamiltonian type that comes with an "extra" conservation law encoding conservation of energy. Again, all the corresponding theorems, Theorem~\ref{propRD}, Theorem~\ref{propMIX} and Theorem~\ref{propHAM}, are stated and proved in Section~\ref{s:spectrum}.

\vspace{2em}

\noindent \textbf{Notation.} Elements $U\in (\C^d)^\Z$ of $(\C^d)^\Z$ are implicitly assumed to have coordinates $U_j=U(j)\in \C^d$, $j\in\Z$. Likewise for $U\in (\C^d)^{\Z\times\R}$, $U_j(t)=U(j,t)$, for $(j,t)\in\Z\times\R$, and $U(t)=U(\cdot,t)\in (\C^d)^\Z$, for $t\in\R$. All along $\mathbf{T}$ denotes the left shift operator on $(\C^d)^\Z$, 
$$\left(\mathbf{T}\, U\right)(j)\ =\ U(j+1)\,,\qquad\textrm{for }U\in (\C^d)^\Z\,.$$
The set $\N$ is the set of nonnegative integers, and, for $(M,N)\in\Z$, $\intl M,N\intr=[M,N]\cap \Z$.  
The resolvent set and the spectrum set of an operator are denoted respectively by $\rho(\cdot)$ and $\sigma(\cdot)$. 

%TODO:
% 1. Chain of oscillators 
% 2. Discretization in space
% 3. Comparison with continuous case (loss of space-translational invariance, unsteady but bounded)

%%%%%%%%%%%%%%%%%%%%%%%%%%%%%%%%%%%%%%%%%%%%%%%%%%%%%%
%    WHITHAM EQUATIONS                               %
%%%%%%%%%%%%%%%%%%%%%%%%%%%%%%%%%%%%%%%%%%%%%%%%%%%%%%

\section{Formal derivation of averaged equations}\label{s:Whitham}

In this section we derive, from formal considerations, averaged modulation systems for three classes of nonlinear
lattice systems, starting with the simplest case of lattice systems of reaction-diffusion type. To do so, we will follow the two-timing method introduced by Whitham \cite{Whitham}. Only afterwards, in Section~4 , shall we prove that the original dynamical evolution is indeed related to these modulation equations.

\subsection{Reaction-diffusion case}\label{s:Whitham-RD}

We consider a discretization of the scalar reaction-diffusion equation for $u:\R\times\R^+\rightarrow \R^d$
\begin{eqnarray}\label{RDeq}
\d_tu\ =\ \Delta u + f(u)
\end{eqnarray}
where $f:\R^d\rightarrow \R^d$ is a given (in general nonlinear) smooth function. Using a centered difference scheme to discretize the spatial Laplacian operator, we obtain the following system of coupled ordinary differential equations
\begin{eqnarray}\label{disRD}
\dfrac{\dD}{\dD t}U_j(t)= \mu[ U_{j+1}(t)-2U_j(t)+U_{j-1}(t) ] + f(U_j(t)), \qquad j\in\Z,\, t>0,
\end{eqnarray}
where $ \mu\in\R$ is a given fixed constant. 
To consider it in the abstract form of dynamical lattice systems, we introduce $F$ the nonlinear operator on $U\in(\C^d)^{\Z}$  associated with $f$ and given by $F(U)=(f(U_j))_{j\in\Z}$. Then, we write system \eqref{disRD} for $U(\cdot)=(U_j(\cdot))_{j\in\Z}$ as
\begin{eqnarray}\label{nonlLA}
\dfrac{\dD}{\dD t}U(t)= \mu\,(\mathbf{T}-2\Id+\mathbf{T}^{-1})\,U(t)+F(U(t))), \qquad t>0\,.
\end{eqnarray}

A profile $u$ generates through $U(t)=(u(kj+\omega\,t))_{j\in\Z}$, a periodic traveling wave to \eqref{nonlLA} with spatial wavelength $k\neq0$ and time frequency $\omega$ if and only if
\begin{eqnarray}\label{funcODE}
\omega u'(\zeta)= \mu[\,u(\zeta+k)-2\,u(\zeta)
+u(\zeta-k)\,]+f(u(\zeta)),\qquad \zeta\in\R\,,
\end{eqnarray}
and $u$ is one-periodic
\be\label{period-one}
u(\zeta+1)=u(\zeta),\qquad \zeta\in\R\,.
\ee

To obtain (even formal) pieces of information about a given periodic wave, we need a precise knowledge of neighboring periodic waves. With this respect we make the following "non degeneracy" assumption:
\be
\label{A_RD}
\begin{array}{r}
\textrm{Periodic traveling waves of \eqref{nonlLA} --- identified when coinciding up to time}\\ 
\textrm{translation --- form a smooth manifold of dimension one, regularly parametrized by $k$.}
\end{array}\tag{A}
\ee
For latter use, we denote by $\uu^k(\cdot),\omega(k)$ such a parametrization by $k$. We refer the reader to references given in the introduction for proofs of existence results. We also stress that our assumption is consistent with both the continuous limit leading to \eqref{RDeq} and similar considerations in \cite{Hays-Levermore-Miller,Hays-Levermore-Miller_erratum,Dreyer-Herrmann-Mielke,Dreyer-Herrmann-Rademacher,Giannoulis-Hermann-Mielke,Dreyer-Herrmann}.

Our formal \emph{ansatz} is of two-scale type, one scale slow --- but otherwise arbitrary ---, involving macroscopic variables, another one fast but oscillatory, hence described through an oscillation phase. Explicitly we want to gain some insight about the asymptotic behavior --- when the slow frequency $\epsilon$ goes to zero --- of families of solution $(U^{(\epsilon)}(\cdot))$ expanding as 
\begin{eqnarray}
U^{(\epsilon)}_j(t)\ =\ u^{(\epsilon)}\left(\underbrace{\varepsilon j}_{X},\underbrace{\varepsilon t}_{T},\underbrace{\dfrac{\phi^{(\epsilon)}(\epsilon j,\epsilon t)}{\epsilon}}_{\Theta}\right)
+\cO(\epsilon^2),
\end{eqnarray} 
with phases $\phi^{(\epsilon)}$
\begin{eqnarray}
\phi^{(\epsilon)}(X,T)
\ =\ \phi_0\left(X,T\right)
+\epsilon\, \phi_1\left(X,T\right)+\epsilon^2\, \phi_2\left(X,T\right)
+\cO(\epsilon^3)\,,
\end{eqnarray}
and profiles $u^{(\epsilon)}$ one-periodic in the third variable $\Theta$
$$
u^{(\epsilon)}(X,T,\Theta)
\ =\ u_0\left(X,T,\Theta\right)
+\epsilon\, u_1\left(X,T,\Theta\right)
+\cO(\epsilon^2)\,.
$$
This yields the following relation
\begin{eqnarray*}
&\epsilon&\d_T u^{(\epsilon)}\left(X,T,\dfrac{\phi^{(\epsilon)}(X,T)}{\epsilon}\right)+(\d_T \phi^{(\epsilon)}(X,T) ) \cdot \d_\Theta u^{(\epsilon)}\left(X,T,\dfrac{\phi^{(\epsilon)}(X,T)}{\epsilon}\right)\\
&=& \mu\,\left[\,u^{(\epsilon)} \left(X+\epsilon,T,\dfrac{\phi^{(\epsilon)}(X+\epsilon,T)}{\epsilon}\right)-
2u^{(\epsilon)}\left(X,T,\dfrac{\phi^{(\epsilon)}(X,T)}{\epsilon}\right)
+u^{(\epsilon)}\left(X-\epsilon,T,\dfrac{\phi^{(\epsilon)}(X-\epsilon ,T)}{\epsilon}\right)\,\right]\\
&+&f\left(u^{(\epsilon)}\left(X,T,\dfrac{\phi^{(\epsilon)}(X,T)}{\epsilon}\right)\right)+\cO(\epsilon^2)\,.
\end{eqnarray*}
Usual identification of powers of $\epsilon$ and replacement of the phase $\phi_0(X,T)/\epsilon+\phi_1(X,T)$ with $\Theta$ then provide at order $\epsilon^0$
\begin{eqnarray*}
&(\d_T \phi_0(X,T) )& \d_\Theta u_0 \left( X,T,\Theta \right)\\
&=&\mu\left[\,u_0 \left( X,T,\Theta+\d_X \phi_0(X,T)\right)-2u_0 \left( X,T,\Theta \right)
+u_0 \left( X,T,\Theta-\d_X \phi_0(X,T) \right)\,\right]\\
&+&f(u_0 \left( X,T,\Theta \right))\,,
\end{eqnarray*}
which is, in the $\Theta$ variable, the functional profile equation with wavenumber and time frequency
\begin{eqnarray}
k_0 (X,T) := \d_X \phi_0(X,T) \qquad \text{and} \qquad \omega_0(X,T):= \d_T \phi_0(X,T)\,.   
\end{eqnarray}
Hence, under a suitable normalization of parametrization, without loss of generality
\be\label{slowmodRD}
u_0(X,T,\Theta)\ =\ \uu^{k_0(X,T)}\,(\Theta)
\ee
where evolution of the local wavenumber obeys 
\be\label{W_RD}
\d_T\,k_0\ -\ \d_X(\omega(k_0))\ =\ 0\,,
\ee
stemming from Schwarz' identity $\d_T\d_X\phi_0=\d_X\d_T\phi_0$.

As a result of our heuristics, we obtain the conjecture that slow/oscillatory solutions evolve at main order according to a slow modulation scenario
$$
U^{(\epsilon)}_j(t)\ =\ \uu^{k_0(\epsilon\,j,\epsilon\,t)}\left(\dfrac{\phi_0(\epsilon j,\epsilon t)}{\epsilon}+\phi_1(\epsilon j,\epsilon t)\right)
\,+\,\cO(\epsilon),
$$
that is, locally, at scale $1$, the solutions look like one member of the periodic traveling wave family, but with local parameters --- phase shift $\phi_1$ and wavenumber $k_0$ --- evolving on a slow scale $1/\epsilon$. In this description local phase and local spatial wavenumber are tied by $k_0=\d_X\phi_0$ and the evolution of the local wavenumber $k_0$ obeys at main order \eqref{W_RD}. In particular, unsurprisingly, in this simple context, linear group velocity around a given wave $\uU$ generated by a profile $\uu^\uk$ is expected to be $\omega'(\uk)$.

In this simple case where the family of periodic waves is one-dimensional and hence modulation equation \eqref{W_RD} is scalar, algebra required to raise the order of description remains relatively simple so that we explain it now. It requires a more precise account of assumption \eqref{A_RD}. In assuming a regular parametrization by $k$, we include that for any wave under consideration --- $\uu$ with wavenumber $\uk$ and frequency $\uom$ --- the operator $\cL$ acting on $L_{per}^2([0,1])$ with domain $H^1_{per}([0,1])$ through
\be\label{op_per}
\cL u\ =\ -\uom\, u'+\mu[\,u(\cdot+\uk)-2u+u(\cdot-\uk)\,]+\dD f(\uu(\cdot))\,u
\ee
has one-dimensional kernel. By time translation invariance, this kernel is thus $\C\,\uu'$. Moreover since, as a (relatively) compact perturbation of $-\omega (\ )'$, $\cL$ is Fredholm of index $0$ --- when considered as usual as a bounded operator on its domain endowed with its graph norm ---, we conclude the existence of a $\uad_{\uk}\in H^1_{per}([0,1])$ spanning the kernel of the adjoint $\cL^*$ of $\cL$ and such that $\langle\uad_{\uk},\uu'\rangle=1$ (in the $L_{per}^2([0,1])$ sense). In particular by differentiating the profile \eqref{funcODE} and taking a scalar product with $\uad_\uk$, one receives
\be\label{lingroup_RD}
\omega'(\uk)\ =\ \langle\uad_\uk,\mu[\uu'(\cdot+\uk)-\uu'(\cdot-\uk)]\rangle\,.
\ee
Moreover, $\uad_{\uk}$ allows us to give a precise account of what is the appropriate normalization of parametrization that we have alluded to above. Indeed we explicitly require
\be
\label{normal_RD}\langle\uad_{\uk},\d_k\uu^\uk\rangle\ =\ 0\tag{N}
\ee
in order to enforce the distinct respective roles of wavenumber and phase shift. The above condition may be achieved by shifting the original $\uu^k$ in a suitable $k$-dependent way.

After these preliminaries we may go back to the identification of terms of order $\epsilon^1$. By dropping all $(X,T)$ dependencies, we receive
\begin{eqnarray*}
-\cL u_1&+&\d_T\phi_1\d_\Theta u_0\,-\,\d_X\phi_1\,\mu\,[\d_\Theta u_0(\cdot+k_0)-\d_\Theta u_0(\cdot-k_0)]\\
&=&-\d_Tk_0 \d_k \uu^{k_0}+\mu\d_Xk_0\,[\d_k \uu^{k_0}(\cdot+k_0)-\d_k \uu^{k_0}(\cdot-k_0)]\\
&&+\frac12\mu\d_Xk_0\,[\d_\Theta u_0(\cdot+k_0)+\d_\Theta u_0(\cdot-k_0)]
\end{eqnarray*}
where $\cL$ is here an $(X,T)$-parametrized version of original $\cL$, associated with $u_0$ and acting in the $\Theta$-variable. Averaging against $\uad_{k_0}$ yields
$$
\d_T\phi_1\,-\,\omega'(k_0)\,k_1\ =\ d(k_0)\,\d_Xk_0\,.
$$
where, for concision's sake, we have introduced a diffusion coefficient
\be\label{linvisc}
d(k)\ =\ \langle\uad_{k},\mu\,[\d_k \uu^{k}(\cdot+k)-\d_k \uu^{k}(\cdot-k)]+
\tfrac12\mu\,[\d_\zeta\uu^k(\cdot+k)+\d_\zeta \uu^k(\cdot-k)]\rangle\,.
\ee

As a refined conclusion, we obtain that slow/oscillatory solutions evolve at main order according to
$$
U^{(\epsilon)}_j(t)\ =\ \uu^{k(\epsilon\,j,\epsilon\,t)}\left(\dfrac{\phi(\epsilon j,\epsilon t)}{\epsilon}\right)
\,+\,\cO(\epsilon)\,,
$$
with local phase and local spatial wavenumber tied by $k=\d_X\phi$ and the evolution of the local wavenumber $k$ obeys at second order 
\be\label{W2_RD}
\d_T k\,-\,\d_X(\omega(k))\ =\ \epsilon\,\d_X(d(k)\d_Xk)\,.
\ee
Mark that in this second-order description, there is no more undetermined local phase shift and that, when present, large-time diffusive decay or anti-diffusive growth may be captured by equation~\eqref{W2_RD}.

%%%%%%%%%%%%%%%%%%%%%%%%%%%%%%%%%%%%%%%%%%%%
%%%%%%%%%%%%%%%%%%%%%%%%  GeneralMixedCase Averaged Equations
%%%%%%%%%%%%%%%%%%%%%%%%%%%%%%%%%%%%%%%%%%%%
\subsection{General mixed case}\label{s:Whitham-conservation}

The main point about discrete reaction-diffusion systems considered in the previous subsection is that they do not support any built-in conservation law. Assumption~\ref{A_RD} then requires that indeed no hidden conservation is present. This leads to a scalar modulation behavior encoded by \eqref{W_RD} or \eqref{W2_RD}. We now relax this stringent assumption to consider a system mixing conservative equations with non conservative ones.

At the continuous level such structure is ubiquitous. For instance it naturally emerges from the modeling of the evolution of isentropic compressible flows undergoing external forces by the Navier-Stokes or Euler systems. Typically, in the former case one receives for the evolution of mass density $\rho$ and velocity field $u$ a system in the form
\be\label{CONeq}
\begin{array}{rcl}
\d_t\rho+\d_x(\rho\,u)&=&0\\
\d_t(\rho u)+\d_x(\rho u^2+P(\rho))&=&\d_x(\nu(\rho)\d_x u)+g(\rho,u),
\end{array}
\ee
where $P$, $\nu$ and $g$ provide respectively the pressure law, Lam\'e viscosity coefficient and external forces.

Here we consider for $U=(R,W)^T$ --- valued in $(\R^d)^Z=(\R^{d_1})^Z\times(\R^{d_2})^Z$ --- systems of the following general form
\be\label{disCON}
\begin{array}{rcl}
\dfrac{\dD}{\dD t}R(t)\,+\,\bD_1 (f_r(R(t),W(t)))&=&0 \\[1em]
\dfrac{\dD}{\dD t}W(t)\,+\,\bD_2 (f_w(R(t),W(t)))&=&\bD_3(B(R(t),W(t)) \bD_4 W(t))\,+\,g(R(t),W(t))\,,
\end{array}
\ee
where $f=(f_r,f_w)^T:\R^d\to\R^d$, $g:\R^d\to\R^{d_2}$ and $B:\R^d\to\cL(\R^{d_2})$ are identified with their component-wise counterparts actions on sequences, of respective type $(\R^d)^\Z\to(\R^d)^\Z$, $(\R^d)^\Z\to(\R^{d_2})^\Z$ and $(\R^d)^\Z\to \cL((\R^{d_2})^\Z)$, and $\bD_j$, $j=1,2,3,4$, are constant-coefficient discrete differential operators, $\bD_1$ being conservative in the sense that the kernel of $\bD_1^*$ (the adjoint of $\bD_1$ on $\ell^2(\Z;\R^{d_1})$) contains constant sequences. From now on, for definiteness' sake, we will restrict our attention to the case where for some $\eta>0$
\be\label{numOP}
\bD_1\ =\ \bD_4 \ =\ \eta\,(\mathbf{T}-\Id)\,,\quad \bD_2\ =\ \bD_3\ =\ \eta\,(\Id-\mathbf{T}^{-1})\,.
\ee 

As in the reaction-diffusion case, we make a "non degeneracy" assumption on the set of periodic traveling waves:
\be
\label{A_mixed}
\begin{array}{r}
\textrm{Periodic traveling waves of \eqref{disCON} --- identified when coinciding up to time}\\ 
\textrm{translation --- form a smooth manifold of dimension $d_1+1$, regularly parametrized}\\
\textrm{ by $(k,\bM)$, their wavenumber and the average-values of their $d_1$th first components.}
\end{array}\tag{A}
\ee
Explicitly $\bM=(M_1,\cdots,M_{d_1})$ with $M_j=\int_0^1 \eD_j\cdot \uu$ with $\eD_j$ the $j$th element of the canonical basis of $\R^d$. For latter use, we denote by $\uu^{k,\bM}(\cdot),\omega(k,\bM)$ a parametrization provided by the previous assumption. Again we refer the reader to references in the introduction for proofs of existence results.

%A traveling wave solution to \eqref{disCON} of the form
%\begin{eqnarray}
%U(t)=(R(t),W(t))\transp = (u(kj+\omega t))_{j\in\ZZ}=(r(kj+\omega t),w(kj+\omega t))\transp_{j\in\ZZ}
%\end{eqnarray}
%with space wavelength $k\neq 0$ and frequency $\omega$ satisfies 
%the functional system of ordinary differential equations
%\begin{eqnarray}
%\omega r'(\zeta)&=&-\eta \left(
%f_r( u(\zeta+k) )-
%f_r( u(\zeta))
% \right)\label{funcCON1} \\
%\omega w'(\zeta) &=&-\eta \left(
%f_w( u(\zeta) )-
%f_w( u(\zeta-k))
% \right) +g( u(\zeta))  \label{funcCON2} \\ 
%& & +\eta[B( u(\zeta)) (w(\zeta+k)-w(\zeta)) 
%-B( u(\zeta-k)) (w(\zeta)-w(\zeta-k))] \nonumber
%\end{eqnarray}
%for $\zeta \in \RR$ and one-periodic profiles $r(\cdot), w(\cdot)$.

Again, a formal \emph{ansatz}
\begin{eqnarray}
U^{(\epsilon)}_j(t)\ =\ u^{(\epsilon)}\left(\underbrace{\varepsilon j}_{X},\underbrace{\varepsilon t}_{T},\underbrace{\dfrac{\phi^{(\epsilon)}(\epsilon j,\epsilon t)}{\epsilon}}_{\Theta}\right)
+\cO(\epsilon^2),
\end{eqnarray}
with phases $\phi^{(\epsilon)}$
\begin{eqnarray}
\phi^{(\epsilon)}(X,T)
\ =\ \phi_0\left(X,T\right)
+\epsilon\, \phi_1\left(X,T\right)+\epsilon^2\, \phi_2\left(X,T\right)
+\cO(\epsilon^3)\,,
\end{eqnarray}
and profiles $u^{(\epsilon)}$ one-periodic in the third variable $\Theta$
$$
u^{(\epsilon)}(X,T,\Theta)
\ =\ u_0\left(X,T,\Theta\right)
+\epsilon\, u_1\left(X,T,\Theta\right)
+\cO(\epsilon^2)\,
$$
leads, under a suitable normalization of parametrization, to
$$
u_0(X,T,\Theta)\ =\ \uu^{(k_0,\bM_0)(X,T)}\,(\Theta)
$$
with
$$
k_0 (X,T) := \d_X \phi_0(X,T)\,,\qquad\omega_0(X,T):= \d_T \phi_0(X,T)
$$
and
$$
\bM_0(X,T):=\int_0^1\begin{pmatrix}\I_{d_1}&0_{d_1\times d_2}\end{pmatrix}u_0(X,T,\Theta)\,\dD\Theta\,.   
$$
But now the law of conservation of waves
$$
\d_T\,k_0\ -\ \d_X(\omega(k_0,\bM_0))\ =\ 0
$$
fails to describe completely the time-evolution.

To proceed, one collects terms of power $\epsilon^1$, replacing $\phi_0(X,T)/\epsilon+\phi_1(X,T)$ with $\Theta$, and, with usual implicit notation for coordinates splitting $u=(r,w)^T$, receives 
$$
-\cL u_1\ +\ \d_T(u_0)\ + \eta \mathbf{T}_{k_0}(\dD f(u_0)\cdot\d_Xu_0)
\ =\ \d_\Theta(\cdots)+\begin{pmatrix}0\\\cdots\end{pmatrix}\,,
$$
where $\cL$ is the operator associated with the linearized profile equation and $\mathbf{T}_{k_0}$ acts on functions of $\Theta$ through $\mathbf{T}_{k_0}(v)=v(\cdot+k_0)$.
%\begin{eqnarray*}
%&&\d_T r_0(X,T,\Theta)+ (\d_X \phi_0(X,T))\cdot \d_\Theta r_1(X,T,\Theta)
%+(\d_T \phi_1(X,T))\cdot \d_\Theta r_0(X,T,\Theta) \\[1em]
%&=&-\eta\, \text{d}f_r(u_0(X,T,\Theta+k_0))
%\big[\left( \d_X \phi_1(X,T)+\tfrac12\d_X^2\,\phi_0(X,T) \right)\d_\Theta u_0(X,T,\Theta+k_0)\big]\\[1em]
%&&-\eta\, \text{d}f_r(u_0(X,T,\Theta+k_0))
%\big[\d_Xu_0(X,T,\Theta+k_0)+u_1(X,T,\Theta+k_0)\big]\\[1em]
%&&+\eta\, \text{d}f_r(u_0(X,T,\Theta)) \cdot u_1 (X,T,\Theta)\, .
%\end{eqnarray*}
Motivated by the fact that constant functions with values $e_j$, $j=1,\dots,d_1$, lie in the kernel of $\cL^*$, we average over $\Theta\in(0,1)$ the scalar product of the previous system with those $e_j$ and obtain
%and using $\bM_0=(M^0_1,\cdots,M^0_{d_1})$ with $M^0_j=\int\limits_0^1 \eD_j\cdot u_0$ we obtain
$$
\d_T\bM_0+\d_XF(k_0,\bM_0)\ =\ 0
$$
where
$$F(k,\bM)\ =\ \eta\int^1_0 f_r (\uu^{k,\bM}(\Theta))\,\dD\Theta \, .$$

Altogether we have derived the following averaged system 
\begin{eqnarray}
\label{WhithamCON}
\begin{cases}
\, \, \, \,  \d_T k-\d_X\omega(k,\bM)=0\\[0.2cm]
\d_T\bM+\d_XF(k,\bM)=0
\end{cases}
\end{eqnarray}
for the evolution of local parameters involved in a slow modulation description of the dynamics.

%\begin{eqnarray}
%U^{(\epsilon)}_j(t)=u(j,t)=u^0\left(\epsilon j,\epsilon t,\tfrac{\phi(\epsilon j,\epsilon t)}{\epsilon}\right)
%+\epsilon\, u^1\left(\epsilon j,\epsilon t,\tfrac{\phi(\epsilon j,\epsilon t)}{\epsilon}\right)
%+\mathcal{O}(\epsilon),
%\end{eqnarray}
%and
%\begin{eqnarray}
%v(j,t)=v^0\left(\epsilon j,\epsilon t,\tfrac{\phi(\epsilon j,\epsilon t)}{\epsilon}\right)
%+\epsilon\, v^1\left(\epsilon j,\epsilon t,\tfrac{\phi(\epsilon j,\epsilon t)}{\epsilon}\right)
%+\mathcal{O}(\epsilon),
%\end{eqnarray}
%with
%\begin{eqnarray}
%\phi(\epsilon j,\epsilon t)
%=\phi_0\left(\epsilon j,\epsilon t\right)
%+\epsilon\, \phi_1\left(\epsilon j,\epsilon t\right)
%+\mathcal{O}(\epsilon),
%\end{eqnarray}
%
%where $u^0,u^1,v^0$ and $v^1$ are one-periodic in their third Variable which we denote by $\Theta$.  %%%Other notation ??
%We plug this ansatz into the system \eqref{disCON} and obtain 
%$$...$$

\subsection{Hamiltonian case}\label{s:Whitham-Hamiltonian}

Up to now we have implicitly assumed that our systems do not contain any "hidden" conservation law. In doing so we were motivated by our will to restrict to non-degenerate cases. However for special algebraic classes of systems 'hidden' conservation laws are indeed generic and we need to accommodate them. As a typical example we analyze systems that are (at least formally) of Hamiltonian type.
Incidentally, although we do not follow this path here, we stress that the presence of an extra structure also offers alternative ways of deriving the same modulation equations; see \cite{Whitham}.
%, that is, that come from some Hamiltonian function through some choice of symplectic structure.

To emphasize analogy with results on continuous dynamical systems in \cite{Benzoni-Noble-Rodrigues} we focus on a class of Hamiltonian lattice dynamical systems that include discrete counterparts to the Korteweg--de Vries equation and the Euler--Korteweg system. Explicitly we fix some Hamiltonian $\cH:(\R^d)^2\to\R$ and some skew-symmetric constant-coefficient discrete differential operator $\bJ$ in a conservative form, say for definiteness' sake $\bJ=\bD\,\bB$ with $\bB$ a symmetric matrix of size $d$ and $\bD\,=\,\frac{\eta}{2} \left(\mathbf{T}-\mathbf{T}^{-1}\right)$. Moreover we choose some other constant-coefficient discrete differential operator $\tbD$ (not necessarily skew-symmetric), say $\tbD=\eta\,(\mathbf{T}-\Id)$, for some $\eta>0$. 

Then we consider the following lattice dynamical system
\be\label{disHam}
\dfrac{\dD}{\dD t}U(t)\ =\ \bJ\,\bfdelta\cH[U(t)],
\ee
where $\bfdelta$ denotes a discrete Euler operator providing variational derivatives for functionals stemming from local functions of $(U,\tbD U)$,
\be\label{disEul}
\bfdelta\cH[U]\ =\ \nabla_U\cH (U,\tbD\,U)+\tbD^*\,\nabla_{\tbD\,U}\cH(U,\tbD U)\,.
\ee
By definition the previous system comes with a local conservation law for the discrete Hamiltonian
\begin{equation}\label{disConLoc}
\dfrac{\dD}{\dD t}[\cH (U,\tbD\,U)]=
\tbD\bigg[ 
\tfrac12\, \mathbf{T}^{-1}(\bfdelta\, \cH[U])\,\bB\, \bfdelta\, \cH[U]+\mathbf{T}^{-1}(\nabla_{\tbD\,U}\cH(U,\tbD\,U))\cdot \bJ\, \bfdelta\, \cH [U]
 \bigg]\,.
\end{equation}
Notice that to derive \eqref{disConLoc} from \eqref{disHam} we have used that
$$
A\ \tbD B-(\tbD^\ast A)\ B\ =\ 
\tbD ((\mathbf{T}^{-1}A)B)
$$
for any $A,B \in (\C^d)^\Z$.

As we have already done in previous subsections, to the shift operator $\mathbf{T}$ acting on sequences we may naturally associate a family of shifts on functions $\mathbf{T}_k(v)=v(\cdot+k)$. For concision's sake, from now on we will likewise denote by $(P(\mathbf{T}))_k$ the operator $P(\mathbf{T}_k)$. In particular, we will freely use notation $\bD_k$, $\tbD_k$, $\bfdelta_k$...

Now, our "non-degeneracy" assumption on the set of periodic traveling waves takes the following form
\be
\label{A_hamilton}
\begin{array}{r}
\textrm{Periodic traveling waves of \eqref{disHam} --- identified when coinciding up to time translation }\\ 
\textrm{ --- form a smooth manifold of dimension $d+2$, regularly parametrized by $(k,\bM, E)$,}\\ 
\textrm{ their wavenumber, the average-values of their components and their Hamiltonian.}
\end{array}\tag{A}
\ee
More explicitely
$$
E\ =\ \int_0^1 \cH (\uu^{(k,M,E)}(\zeta),\tbD_k\,\uu^{(k,M,E)}(\zeta))\,\dD\zeta\,.
$$

With notational convention similar to the one of previous subsections, a two-scale \emph{ansatz} leads, under a suitable normalization of parametrization, to
$$
u_0(X,T,\Theta)\ =\ \uu^{(k_0,\bM_0,E_0)(X,T)}\,(\Theta)
$$
with
$$
k_0 (X,T) := \d_X \phi_0(X,T)\,,\qquad\omega_0(X,T):= \d_T \phi_0(X,T)\,,
$$
$$
\bM_0(X,T)\ :=\ \int_0^1u_0(X,T,\Theta)\,\dD\Theta\,,   
$$
and
$$
E_0(X,T)\ :=\ \int_0^1 \cH (u_0(X,T,\Theta),\tbD_{k_0(X,T)}\,u_0(X,T,\Theta))\,\dD\Theta\,.
$$
Moreover we still have a law of conservation of waves
$$
\d_T\,k_0\ -\ \d_X(\omega(k_0,\bM_0))\ =\ 0
$$
and a conservation law for the averaged-values
$$
\d_T \bM_0\ =\ \bB\, \d_X F(k_0,\bM_0,E_0),  
$$
where 
$$ 
F(k,\bM,E)\ =\ \eta\int_0^1 \bfdelta_{k}(\cH)[\uu^{k,\bM,E}](\zeta)\,\dD\zeta
\ =\ \eta\int_0^1 \nabla_U\cH(\uu^{k,\bM,E}(\zeta),\tbD_k\uu^{k,\bM,E}(\zeta))\,\dD\zeta\,.
$$
The latter fact is easily deduced from the fact that by collecting terms of power $\epsilon^1$ stemming from the insertion of our \emph{ansatz} in our system we receive an equation in the form
$$
-\cL u_1\ +\ \d_T(u_0)\ - \tfrac{\eta}{2}(\mathbf{T}_{k_0}+\mathbf{T}_{-k_0})\bB\d_X(\bfdelta_{k_0}(\cH)[u_0])
\ =\ \d_\Theta(\cdots)+\bD_{k_0}(\cdots)\,.
$$
%Collecting the terms of power $\epsilon^1$
%and averaging about the phase in \eqref{ansatzEQH} 
%using
%$$\bM_0=\int\limits_0^1 u_0(X,T,\Theta)\text{d}\Theta$$
%yields
%\begin{eqnarray*}
%\d_T \bM_0=\bB\, \d_X F(k_0,\bM_0),  
%\end{eqnarray*}
%where 
%$$ F(k,\bM)=\int\limits_0^1 
%\bfdelta_{k}\, \cH (u^{k,\bM}(\Theta))
%\text{d}\Theta$$
%and
%$$ 
%\bfdelta_{k}\, \cH (u^{k,\bM}(\Theta))
%:=
%\nabla_U\cH \left( (u^{k,\bM}(\Theta)),\tbD_{k} u^{k,\bM}(\Theta) \right) 
%+\d_X \left(
%\nabla_{\tbD\,U}\cH \left((u^{k,\bM}(\Theta)),\tbD_{k} u^{k,\bM}(\Theta) \right) \right)
%.$$

The derivation of an equation for the time evolution of $E_0$ requires more algebra. To write computations in a compact way we introduce some more pieces of notation. First we set $\cL_\xi:=\eD^{-\iD\xi\cdot}\,\cL\,\eD^{\iD\xi\cdot}$ then we denote $\cL^{(1)}$ and $\cL^{(2)}$ the operators involved in the expansion
$$
\cL_\xi\ =\ \cL\ +\ (\iD\xi\,k)\,\cL^{(1)}\ +\ (\iD\xi\,k)^2\,\cL^{(2)}\ +\ \cO(|\xi|^3)\,.
$$
With these preliminaries the equation obtained from the collection of coefficients of $\epsilon^1$ is explicitly written as
$$
\begin{array}{rcl}
-\cL u_1&+&\d_T(u_0)\ +\ (\d_T\phi_1+c(k_0,M_0,E_0)\d_X\phi_1)\,\d_\Theta u_0\\[1em]
&=&\d_X\phi_1\,\cL^{(1)}\d_\Theta u_0\ +\ \d_X^2\phi_0\,\cL^{(2)}\d_\Theta u_0
\ +\ (\cL^{(1)}-c(k_0,M_0,E_0))\,\d_X u_0\,.
\end{array}
$$
%\ +\ \tfrac{\eta}{2}(T_{k_0}+T_{-k_0})\bB\,\d_X(\bfdelta_{k_0}(\cH)[u_0])\\[1em]
%&&-\eta\bD_{k_0}(\bB\,T_{-k_0}\d_X(\nabla_{\tbD U}\cH(u_0,\tbD_{k_0}u_0)))\\[1em]
%&&+\eta\bD_{k_0}(\bB\,\dD_{\tbD U}(\nabla_U\cH)(u_0,\tbD_{k_0}u_0))\cdot T_{k_0}\d_Xu_0))\\[1em]
%&&+\eta\bD_{k_0}(\bB\,\tbD_{k_0}^*(\dD_{\tbD U}(\nabla_{\tbD U}\cH)(u_0,\tbD_{k_0}u_0))\cdot T_{k_0}\d_Xu_0)))\,.
%\end{array}
%$$
Since $\delta_{k_0}[\cH](u_0)$ belongs to the kernel of $\cL^*$, we shall average over $\Theta\in(0,1)$ the scalar product of the previous system with $\delta_{k_0}[\cH](u_0)$. Our claim is that we receive
$$
\d_TE_0\ -\ \d_X(S(k_0,\bM_0,E_0))\ =\ 0
$$
where
$$
\begin{array}{rcl}
\displaystyle
S(k,\bM,E)&=&\displaystyle
\eta\int^1_0 \tfrac12\, \mathbf{T}_{-k}(\bfdelta_k(\cH)[\uu^{(k,M,E)}])\,\bB\, \bfdelta_k(\cH)[\uu^{(,M,E)}]\\[1em]
&+&\displaystyle
\eta\int^1_0\mathbf{T}_{-k}(\nabla_{\tbD\,U}\cH(\uu^{(k,M,E)},\tbD_k\,\uu^{(k,M,E)}))\cdot \bJ_k\, \bfdelta_k(\cH)[\uu^{(k,M,E)}]
\end{array}
$$

Indeed it does follow from direct computations. One one hand, when $a\in\{\zeta,k,M_1,\cdots,M_d,E\}$, one computes that 
\be\label{e:d-Ham}
\delta_{k}[\cH](\uu)\cdot \d_a\uu\ =\ \d_a(\cH(\uu,\tbD_k\uu))-\eta \mathbf{T}_{-k}(\nabla_{\tbD\,U}\cH(\uu,\tbD_k\,\uu))\cdot\d_\zeta \uu\ \d_ak
\ +\ (\cdots)
\ee
where $(\cdots)$ denotes mean-free terms and dependence of $\uu$ on parameters has been omitted. On the other hand, for $a\in\{\zeta,k,M_1,\cdots,M_d,E\}$, we also have that
\be\label{e:d-Hamflux}
\begin{array}{rcl}
\displaystyle
\delta_{k}[\cH](\uu)&\cdot& \displaystyle
\left((\cL^{(1)}-\uc)\d_a\uu\ +\ \,\cL^{(2)}(\d_\zeta \uu)\d_ak\right)\\[1em]
&=&\displaystyle
\d_a\left(\eta \tfrac12\, \mathbf{T}_{-k}(\bfdelta_k(\cH)[\uu])\,\bB\, \bfdelta_k(\cH)[\uu]
\ +\ \eta \mathbf{T}_{-k}(\nabla_{\tbD\,U}\cH(\uu,\tbD_k\,\uu))\cdot \bJ_k\, \bfdelta_k(\cH)[\uu]\right)\\[1em]
&-&\displaystyle
\eta \mathbf{T}_{-k}(\nabla_{\tbD\,U}\cH(\uu,\tbD_k\,\uu))\cdot\d_\zeta \uu\ \d_a\omega
\ +\ (\cdots)
\end{array}
\ee
with the same convention. This yields our claim by integration using that $\d_Tk_0=\d_X(\omega(k_0,\bM_0,E_0))$. Alternatively one could have derived this last equation by directly expanding and averaging equation \eqref{disConLoc}.

%We also put the Whitham ansatz in \eqref{disConLoc} and collect
%all terms of order $\epsilon^0$, this shows that $u_0$ satisfies in its third variable the functional equation \eqref{funcHadd}. Collecting all terms of order $\epsilon^1$ and averaging about the phase yields
%\begin{eqnarray}\label{addWh3}
%\d_T P(u_0)=\d_X S(u_0),
%\end{eqnarray}
%where
%\begin{eqnarray}
%P(u_0):=\int\limits_0^1
% \left( \delta_{k_0} \cH \left(u_0(X,T,\Theta),\tbD_{k_0} u_0(X,T,\Theta) \right) \right)\text{d}\Theta
% \end{eqnarray}
%and
% \begin{eqnarray*}
%S(u_0)&:=&\int\limits^1_0
%\bigg[ 
%\tfrac12\, T_{k_0}^{-1}(\bfdelta_{k_0}\, \cH( u_0, \tbD_{k_0} u_0 )\,\bB\, \bfdelta_{k_0}\, \cH ( u_0, \tbD_{k_0} u_0 )\\
%&+&
%T^{-1}_{k_0}(\nabla_{\tbD\,U}\cH( u_0, \tbD_{k_0}u_0 )
%\cdot \bD_{k_0} \left( \bB\, \bfdelta_{k_0}\, \cH ( u_0, \tbD_{k_0} u_0 ) \right)
% \bigg]\, \text{d}\Theta\, ,
%\end{eqnarray*}
%where we have omittet the arguments of
%$u_0(X,T,\Theta)$. 
%%$$\bfdelta_{\d_X\phi_0}\, \cH (u(\cdot))= 
%%\nabla_U\cH \left(u(\cdot),\tbD_{\d_X\phi_0} u(\cdot) \right) +
%%\tbD^\ast_{\d_X\phi_0}
%%\nabla_{\tbD\,U}\cH \left(u(\cdot),\tbD_{\d_X\phi_0} u(\cdot) \right).$$

Altogether we have derived the following averaged equations for the slow modulation of local parameters
\begin{equation}
\label{WhithamHAM}
\left\{
\begin{array}{rcl}
\d_T k &=&\d_X\omega(k,\bM,E)\\[0.5em]
\d_T\bM &=&\bB\d_XF(k,\bM,E) \\[0.5em]
\d_T E &=&\d_X S(k,\bM,E)\,.
\end{array}\right.
\end{equation}
%As in the gerneral mixed case we obtain 
%according to a slow modulation behavior 
%$$U^{(\epsilon)}_j(t)\ =\ u^{(k_0,\bM_0,P_0)(\epsilon\,j,\epsilon\,t)}\left(\dfrac{\phi_0(\epsilon j,\epsilon t)}{\epsilon}+\phi_1(\epsilon j,\epsilon t)\right)
%\,+\,\cO(\epsilon).
%$$

%%%%%%%%%%%%%%%%%%%%%%%%%%%%%%%%%%%%%%%%%%%%%%%%%%%%%%
%   Analytic framework                               %
%%%%%%%%%%%%%%%%%%%%%%%%%%%%%%%%%%%%%%%%%%%%%%%%%%%%%%

\section{Analytic framework}

We collect in this section some preliminaries from functional analysis that will allow us to formulate in which sense the formally derived averaged equations provide some valuable pieces of information at the spectral level.

\subsection{Integral transform}\label{s:Bloch}

To analyze linearized lattice systems with periodic coefficients we shall make use of an adapted integral transform, usually called discrete Bloch transform. Though probably less known that its continuous counterpart it is a natural functional tool that have already received various applications in related contexts, see for instance \cite{ChirilusBruckner-Chong-Prill-Schneider_wave-packets_periodic-chains_modulation,ChirilusBruckner-Schneider_standing-pulse_periodic-media}.

\subsubsection{Discrete Bloch Transform}\label{s:DBT}

Let $N\in\N^*$ be given. We define the following transform 
\be
\cB:\quad \ell^2(\Z;\C^d)\ \rightarrow\ L^2\left( \intl0,N-1\intr\times\left[-\tfrac{\pi}{N},\tfrac{\pi}{N}\right];\C^d\right)\,,\quad
f\ \mapsto\ \check{f}
\ee
where for $j\in \intl0,N-1\intr$
\be\label{DBT}
\check{f}(j,\cdot):= \lim_{k_0\to\infty}\sum\limits_{k\in\intl-k_0,k_0\intr}\eD^{-\iD(kN+j)\ \cdot}\ f_{kN+j}\,,
\ee
the limit being taken in the $L^2\left(\left[-\tfrac{\pi}{N},\tfrac{\pi}{N}\right] \right)$-sense.
Up to a multiplicative constant this is a total isometry whose reciprocal is given as
\be
\cB^{-1}:\quad L^2\left( \intl0,N-1\intr\times\left[-\tfrac{\pi}{N},\tfrac{\pi}{N}\right];\C^d\right)\ \rightarrow\ \ell^2(\Z;\C^d)\,,\quad
\check{f}\ \mapsto\ f
\ee
where for $(k,j)\in \Z\times\intl0,N-1\intr$
\be\label{IDBT}
f_{kN+j}\ =\ \dfrac{N}{2\pi}\,\int_{-\tfrac{\pi}{N}}^{\tfrac{\pi}{N}}\eD^{\iD\,(kN+j)\,\xi}\ \check{f}(j,\xi)\,\dD\xi\,.
\ee

From now on we'll constantly identify functions on $\Z/(N\Z)$ with functions on $\intl0,N-1\intr$ through usual restriction/periodic extension processes. To this purpose we stress that formula~\eqref{DBT} already provides the needed periodic extension. With this identification, inverse formula~\eqref{IDBT} simply reads for any $j\in\Z$
\be
f_{j}\ =\ \dfrac{N}{2\pi}\,\int_{-\tfrac{\pi}{N}}^{\tfrac{\pi}{N}}\eD^{\iD\,j\,\xi}\ \check{f}(j,\xi)\,\dD\xi\,.
\ee

\subsubsection{Discrete Bloch symbols}\label{s:DBT}

To any element $\xi\mapsto A_\xi$ of $L^\infty(\left[-\tfrac{\pi}{N},\tfrac{\pi}{N}\right];\cL(\ell^2(\Z/(N\Z);\C^d)))$ one may associate a bounded operator $A\in\cL(\ell^2(\Z;\C^d))$ such that, for any $f\in\ell^2(\Z;\C^d)$, 
$$
%\cB(Af)(\cdot,\xi)\ =\ A_\xi \cB(f)(\cdot,\xi)
(Af)\check{\ }(\cdot,\xi)\ =\ A_\xi \check{f}(\cdot,\xi)\,.
$$
Explicitly 
\be
(Af)_{j}\ =\ \dfrac{N}{2\pi}\,\int_{-\tfrac{\pi}{N}}^{\tfrac{\pi}{N}}\eD^{\iD\,j\,\xi}\ (A_\xi\,\check{f}(\cdot,\xi))_j\,\dD\xi\,.
\ee
Moreover up to a multiplicative constant the map $$L^\infty(\left[-\tfrac{\pi}{N},\tfrac{\pi}{N}\right];\cL(\ell^2(\Z/(N\Z);\C^d)))\to\cL(\ell^2(\Z;\C^d))$$ is a partial isometry. From this follows the classical identity between the spectrum of the synthetized operator $A$ and the essential range of $\xi\mapsto \sigma(A_\xi)$. In particular, in the case where $\xi\mapsto A_\xi$ belongs to $\cC^0(\left[-\tfrac{\pi}{N},\tfrac{\pi}{N}\right];\cL(\ell^2(\Z/(N\Z);\C^d)))$, this yields
$$
\sigma(A)\ =\ \bigcup_{\xi\in\left[-\tfrac{\pi}{N},\tfrac{\pi}{N}\right]} \sigma(A_\xi)
$$
so that the spectrum of $A$ may be analyzed by looking at the family of spectra of the finite-dimensional symbols $A_\xi$.

Reciprocally for a large class of $A\in\cL(\ell^2(\Z;\C^d))$ one may define corresponding symbols $\xi\mapsto A_\xi$ formally given by
$$
A_\xi\ =\ \eD^{-\iD\,\cdot\,\xi}\ A\ \eD^{\iD\,\cdot\,\xi}
$$
that is, for $f\in\ell^2(\Z/(N\Z);\C^d)$
$$
(A_\xi f)_j\ =\ \eD^{-\iD\,j\,\xi}\ (A\ (\eD^{\iD\,\cdot\,\xi}f))_j
$$
with an extended sense of $A$. To make things more concrete, we focus now on the case where $A$ is an $N$-periodic discrete differential operator in the sense that it is given as 
$$
A\ =\ a(\cdot,\mathbf{T})\quad\textrm{with}\quad a(\cdot,X)\ =\ \sum_{j\in\Z}\, a_j(\cdot)\,X^j
$$
where $\mathbf{T}$ is again the left shift operator and $a:\Z\to\cL(\C^d)$ is $N$-periodic. % and each valued in finitely supported sequences of $(\cL(\C^d))^{\Z}$. 
The Bloch symbols of $A$ are then given by
$$
A_\xi\ =\ a(\cdot,\eD^{\iD\,\xi}\,\mathbf{T})
$$
where $\mathbf{T}$ is now the left shift operator acting on $(\C^d)^{\Z/(N\Z)}$ and $a$ is considered as a function on $\Z/(N\Z)$.

\subsection{Linear evolution}\label{s:spectrum}

%\subsection{General material}\label{s:spec-general}

By linearizing original equations we receive equations in the form 
$$
\dfrac{\dD}{\dD t}V (t)\ =\ A(t)\,V(t)
$$
where $A(\cdot)$ is a $1/\uom$-periodic family of bounded operators generating $S(\cdot,\cdot)$ an evolution system on $\ell^2(\Z;\C^d)$ (in the sense defined for instance in \cite[Chapter 5]{Pazy}). Explicitly we have
$$
(\forall s\in\R,\ S(s,s)\ =\ \Id)\quad\textrm{and}\quad (\forall (s,r,t)\in\R^3,\ S(t,s)\ =\ S(t,r)S(r,s))
$$
and
$$
\forall (s,t)\in\R^2,\ (\d_tS)(t,s)\ -\ A(t)\,S(t,s)\ =\ 0\quad\textrm{and}\quad(\d_sS)(t,s)\ +\ S(t,s)\,A(s)\ =\ 0\,.
$$
Mark that periodicity implies that linear stability in the sense that
$$
\sup_{t\in\R_+}\ \|S(t,0)\|\ <\ \infty
$$
is equivalent to
$$
\sup_{k\in\N}\ \|S(1/\uom,0)^k\|\ <\ \infty\,.
$$
Likewise linear exponential instability in the sense that
$$
\exists \gamma>0\,,\ \inf_{t\in\R_+}\ \eD^{-\gamma\,t}\,\|S(t,0)\|\ >\ 0
$$
is equivalent to
$$
\exists \kappa>1\,,\ \inf_{k\in\N}\ \kappa^{-k}\,\|S(1/\uom,0)^k\|\ >\ 0\,,
$$
that is to $\rho(S(1/\uom,0))>1$.

\br\label{space-translation-invariance} Here we use time-translation invariance and periodicity in time as a substitute to space-translation invariance and stationarity in a moving frame that play a crucial role in similar analyses of continuous systems. However as pointed out for instance in \cite{Friesecke-Pego-I,Friesecke-Pego-II,Friesecke-Pego-III,Friesecke-Pego-IV} the loss of an exact invariance in space does not preclude arguments based on a pseudo-invariance. Notably, mark that for all $t\in\R^+$
$$
A(t+1/c)=\mathbf{T}^{-1}A(t)\mathbf{T}\,.
$$
This implies that for all $(t,s)\in\R$
$$
S(t+1/c,s+1/c)=\mathbf{T}^{-1}S(t,s)\mathbf{T}\,.
$$
As a consequence, linear stability is also equivalent to
$$
\sup_{k\in\N}\|(\mathbf{T}S(1/c,0))^k\|<\infty\,.
$$
The fact that such an alternative exist is obviously crucial when dealing with fronts, shocks or solitary waves rather than with periodic waves.
\er

From now on, in order to make the most of the discrete Bloch transform we focus on cases where, for some $N\in\N^*$, $A(\cdot)$ is valued in $N$-periodic discrete differential operators that is we restrict to cases where linearization is about a periodic traveling wave with an integer period, $N$. Then $A_\xi(\cdot)$ the value at $\xi$ of the corresponding family of Bloch symbols generates the evolution system given by the value at $\xi$, $S_\xi(\cdot,\cdot)$, of the family of Bloch symbols associated with $S(\cdot,\cdot)$. In our context some continuity in the $\xi$-variable is available and linear stability reads
$$
\sup_{\xi\in\left[-\tfrac{\pi}{N},\tfrac{\pi}{N}\right]}\sup_{k\in\N}\ \|S_\xi(1/\uom,0)^k\|\ <\ \infty\,,
$$
while linear exponential instability may be written as 
$$
\exists \xi\in\left[-\tfrac{\pi}{N},\tfrac{\pi}{N}\right]\,,\ \rho(S_\xi(1/\omega,0))>1\,.
$$
This implies that a necessary condition for linear stability is that for any $\xi\in\left[-\tfrac{\pi}{N},\tfrac{\pi}{N}\right]$
$$
\sigma\left(S_\xi(1/\uom,0)\right)\ \subset\ \bar B(0,1)\,.
$$

The fact that each periodic wave is embedded in a family of neighboring waves implies that $1$ always belongs to the spectrum of $S_0(1/\uom,0)$. It is then crucial in the analysis of linear stability to determine how the eigenvalue $1$ evolves when $\xi$ is varied. Our goal is to prove that this piece of information is precisely given by the averaged equations.

\subsection{Algebraic interplay}\label{s:algebraic}

To analyze the evolution of the spectrum of $S_\xi(1/\omega,0)$ when $\xi$ is small, we first stress the following classical identity, for any $\xi\in\left[-\tfrac{\pi}{N},\tfrac{\pi}{N}\right]$
\be\label{solFor}
\forall (s,t)\in\R^2,\quad S_\xi(t,s)\ =\ S_0(t,s)\ +\ \int_s^tS_0(t,r)\,(A_\xi(r)-A_0(r))\,S_\xi(r,s)\,\dD r\,.
\ee
In particular, expanding $A_\xi(\cdot)$ as
$$
A_\xi(\cdot) \ \stackrel{\xi\to0}{=}\ A_0(\cdot)\,+\,\iD\,\xi\ A^{(1)}(\cdot)\,+\,(\iD\,\xi)^2\ A^{(2)}(\cdot)\,+\,\cO(|\xi|^3)\,,
$$
uniformly in time, for some $A^{(1)}(\cdot)$, $A^{(2)}(\cdot)$, we receive locally uniformly in time
$$
S_\xi(t,s) \ \stackrel{\xi\to0}{=}\ S_0(t,s)\,+\,\iD\,\xi\ S^{(1)}(t,s)\,+\,(\iD\,\xi)^2\ S^{(2)}(t,s)\,+\,\cO(|\xi|^3)\,,
$$
with
\be\label{e:S-expand}
\begin{array}{rcl}
S^{(1)}(t,s)&=&\displaystyle
\int_s^tS_0(t,r)\,A^{(1)}(r)\,S_0(r,s)\,\dD r\\[1em]
S^{(2)}(t,s)&=&\displaystyle
\int_s^tS_0(t,r)\,A^{(2)}(r)\,S_0(r,s)\,\dD r\ +\ \int_s^tS_0(t,r)\,A^{(1)}(r)\,S^{(1)}(r,s)\,\dD r\,.
\end{array}
\ee

We have already encountered a similar expansion for $\cL_\xi:=\eD^{-\iD\xi\cdot}\,\cL\,\eD^{\iD\xi\cdot}$. Operators $\cL_\xi$ acting on functions of period $1$ are actually continuous Bloch symbols of the operator obtained by extending $\cL$ to functions defined on the full line. Since we won't need this relation here, we mostly use the fact that by expanding
$$
\cL_\xi\ =\ \cL\ +\ (\iD\xi\,k)\,\cL^{(1)}\ +\ (\iD\xi\,k)^2\,\cL^{(2)}\ +\ \cO(|\xi|^3)\,.
$$
we obtain operators $\cL^{(j)}$ that are practical shorthands to denote relations obtained by differentiating profile equation with respect to parameters. For instance, when $a$ denotes a parameter which is not $k$ (but possibly $\zeta$, to account for phase shifts), one receives
\be\label{e:a-profile}
\cL\d_a\uu\ =\ \d_a\omega\d_\zeta\uu
\ee
while
\be\label{e:k-profile}
\cL\d_k\uu\ =\ -\uk\d_kc\,\d_\zeta\uu\ -\ \cL^{(1)}\d_\zeta\uu\,.
\ee

Proofs of our main results strongly rely on the interplay between $\cL_\xi$ and $A_\xi$ that allows to use the previous relations. Explicitly to any function $v:\R\to\C^d$ of period $1$, we associate $V^v:\R\to (\C^d)^\Z$ by
$$
V^v(j,t)\ =\ v(\uk\,(j-\uc t))\,.
$$
Observe that when $v$ and $w$ are one-periodic
$$
\int_0^{1/\uom}\langle V^w(s),V^v(s)\rangle\dD s\ =\ \frac{1}{\uk\uom}\langle w,v\rangle\,.
$$
Note also that when $v$ is one-periodic $V^v(\cdot,t)$ is $1/\uom$-periodic in time and valued in $N$-periodic sequences. In this case,
\be\label{e:convert}
\begin{array}{rcccccl}
\frac{\dD}{\dD t}V^v(t)&-&A_\xi(t)\,V^v(t)&=&-V^{\cL_\xi\,v}(t)&-&\iD\xi\uom V^v(t)\\[0.5em]
\frac{\dD}{\dD t}V^v(t)&+&A_\xi(t)^*\,V^v(t)&=&V^{\cL_\xi^*\,v}(t)&-&\iD\xi\uom V^v(t)\,.
\end{array}
\ee
In particular
\be\label{e:dual-convert}
\frac{\dD}{\dD t}(\langle V^w, V^v\rangle)(t)
\ =\ \langle V^{\cL_\xi^*\,w}(t),V^v(t)\rangle\,-\,\langle V^w(t),V^{\cL_\xi\,v}(t)\rangle\,,
\ee
which turns to be crucial to transport duality relations. Obviously by expanding in $\xi$ the foregoing relations we obtain corresponding relations between operators $A^{(j)}$ and operators $\cL^{(j)}$, for instance
\be\label{e:expand-convert}
\begin{array}{rcccl}
\frac{\dD}{\dD t}V^v(t)&-&A_0(t)\,V^v(t)&=&-V^{\cL\,v}(t)\\[0.5em]
&-&A^{(1)}(t)\,V^v(t)&=&-\uk\,V^{(\cL^{(1)}-\uc)\,v}(t)\\[0.5em]
&-&A^{(2)}(t)\,V^v(t)&=&-\uk^2\,V^{\cL^{(2)}\,v}(t)\\[0.5em]
\frac{\dD}{\dD t}V^v(t)&+&A_0(t)^*\,V^v(t)&=&V^{\cL^*\,v}(t)\,.
\end{array}
\ee

Mark that from \eqref{e:a-profile} and \eqref{e:convert} 
$$
\forall (t,s)\in\R^2\,,\ V^{\d_\zeta\uu}(t)\ =\ S_0(t,s)\,V^{\d_\zeta\uu}(s)\,.
$$
In particular, $V^{\d_\zeta\uu}(0)\in\ker (S_0(1/\uom,0)-\Id)$. More generally \eqref{e:a-profile} and \eqref{e:convert} provide as many members of the generalized kernel of $S_0(1/\uom,0)-\Id$ as the expected dimension of the family of periodic traveling waves (counted up to translation). Regardless of the version of the equations that we are considering, assumption A may be reformulated as the fact that these elements form a basis of this generalized kernel. Moreover \eqref{e:k-profile}, \eqref{e:convert} and \eqref{e:S-expand} yield
\be\label{e:k-S}
\forall (t,s)\in\R^2\,,\ V^{\d_k\uu}(t)\ =\ S_0(t,s)\,V^{\d_k\uu}(s)\ -\ (t-s)\d_k\omega\, V^{\d_\zeta\uu}(s)
\ +\ \frac1\uk S^{(1)}(t,s)V^{\d_\zeta\uu}(s)\,.
\ee

%%%%%%%%%%%%%%%%%%%%%%%%%%%%%%%%%%%%%%%%%%%%%%%%%%%%%%
%           Spectrum                                 %
%%%%%%%%%%%%%%%%%%%%%%%%%%%%%%%%%%%%%%%%%%%%%%%%%%%%%%

\section{Spectral validation of averaged equations}\label{s:spectrum}

\subsection{Reaction-diffusion case}\label{s:spectrum-RD}

Firstly we analyze the linearization of \eqref{nonlLA} about $\uU(\cdot)$ the periodic traveling wave associated to $\uu^{\uk}$, with $\uk=1/N$. The linearized evolution obeys $(\forall t\,,\quad \tfrac{\dD}{\dD t}V (t)\ =\ A(t)\,V(t))$ where
$$
A(\cdot)\ =\ \mu\,(\mathbf{T}-2\Id+\mathbf{T}^{-1}))\,+\,\dD f(\uU(\cdot))
$$
with usual identification of functions with their local action on sequences, that is, here, 
$$
\forall (t,j)\,,\ [\dD f(\uU(t))V]_j\ =\ \dD f(\uU_j(t))\,V_j\,.
$$
Explicitly here
$$
A^{(1)}(\cdot)\equiv \mu\,(\mathbf{T}-\mathbf{T}^{-1})
\quad\textrm{and}\quad
A^{(2)}(\cdot)\equiv \frac12\mu\,(\mathbf{T}+\mathbf{T}^{-1})\,.
$$

Now our main goal is to prove the following result that validates \eqref{W_RD} and \eqref{W2_RD} at the spectral level. 
%To do so we strengthen normalization \ref{normal_RD} as 
%\be
%\label{strong_normal_RD}
%\langle\uad_{\uk},\d_k\uu^\uk\rangle\ =\ 0
%\qquad\textrm{and}\qquad
%\langle V^{\uad_{\uk}}(0),V^{\d_k\uu^\uk}(0)\rangle\ =\ 0\,.
%\tag{N'}
%\ee
%Since
%$$
%\int_0^{1/\uom} \langle V^{\uad_{\uk}}(s),V^{\d_k\uu^\uk}(s)\rangle\dD s
%\ =\ \frac1\uk\langle\uad_{\uk},\d_k\uu^\uk\rangle
%$$
%this may be achieved by introducing a suitable phase shift $\phi(k)$ in the parametrization. The first part of the condition determines $\phi'(\uk)$ while the latter part fixes $\phi(\uk)$.

\begin{theorem}\label{propRD}
%Let assume \eqref{mainass} and normalize as in \eqref{normal}. 
Assuming that condition \ref{A_RD} holds in a neighborhood of $\uu^{\uk}$, there exist positive $\epsilon_0$ and $\xi_0\in(0,\pi/N)$ and an analytic curve $\lambda:[-\xi_0,\xi_0]\to\C$ such that
$$
\forall \xi\in[-\xi_0,\xi_0]\,,\ \sigma(S_\xi(1/\omega^{\uk},0))\,\cap\,B(1,\epsilon_0)\ =\ \{\lambda(\xi)\}\,.
$$
Moreover, for $\xi\in[-\xi_0,\xi_0]$, eigenvalue $\lambda(\xi)$ of $S_\xi(1/\omega^{\uk},0)$ is simple and
\be\label{eig_expand_RD}
\lambda(\xi)\ \stackrel{\xi\to0}{=}\ \eD^{\frac{1}{\omega^{\uk}}\,\left(\iD\uk\xi\d_k\omega^{\uk}\,+\,(\iD\uk\xi)^2\,d(\uk)\right)}\ +\ \cO(|\xi|^3)\,,
\ee
where $d(\uk)$ is given by \eqref{linvisc} under normalization \ref{normal_RD}. %\ref{strong_normal_RD}.
%Then we have
%\begin{eqnarray*}
%\lambda_0=1,\quad
%\lambda^{(1)}=p\, \partial_k \omega,\quad
%\lambda^{(2)}=\bil{S^{(2)}\varphi_0+S^{(1)}\varphi^{(1)}}{w^{\text{ad}}}
%\end{eqnarray*}
%with $\varphi^{(0)}=w_0(0)$ and $\varphi^{(1)}=w_1(0)$.
\end{theorem}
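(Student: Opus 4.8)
The plan is to transfer the Floquet eigenvalue problem for $S_\xi(1/\omega^\uk,0)$ near $1$ to a spectral perturbation of the profile operator $\cL_\xi$ near its eigenvalue $0$, for which the algebra is exactly the one already carried out in Section~\ref{s:Whitham-RD}. The bridge is the conversion identity \eqref{e:convert}. Let $v_\xi$ be the $1$-periodic eigenfunction of $\cL_\xi$ depending analytically on $\xi$, with $v_0=\uu'$, associated with the eigenvalue $\nu(\xi)$, $\nu(0)=0$, branching from the eigenvalue $0$ of $\cL=\cL_0$; this eigenvalue is algebraically simple and isolated, since $\ker\cL=\C\,\uu'$ by \ref{A_RD}, $\cL$ has compact resolvent, and $\bil{\uad_\uk}{\uu'}=1\neq0$ forces $\uu'\notin\Range\cL$. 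Since $v_\xi$ is $1$-periodic, $V^{v_\xi}$ is $1/\omega^\uk$-periodic in time, so $V^{v_\xi}(1/\omega^\uk)=V^{v_\xi}(0)$; inserting $\cL_\xi v_\xi=\nu(\xi)\,v_\xi$ into \eqref{e:convert} gives $\tfrac{\dD}{\dD t}V^{v_\xi}=(A_\xi-(\nu(\xi)+\iD\xi\,\omega^\uk))\,V^{v_\xi}$, whence
\[
S_\xi(1/\omega^\uk,0)\,V^{v_\xi}(0)\ =\ \eD^{(\nu(\xi)+\iD\xi\,\omega^\uk)/\omega^\uk}\,V^{v_\xi}(0)\,.
\]
It therefore suffices to expand $\nu(\xi)$ and to check that this accounts for the only spectrum near $1$.

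For the latter I would argue directly on the finite matrix. As $\uk=1/N$, each Bloch symbol acts on $\ell^2(\Z/(N\Z);\C^d)\cong\C^{Nd}$, and by \eqref{solFor} together with the entire dependence of $A_\xi$ on $\xi$ the map $\xi\mapsto S_\xi(1/\omega^\uk,0)$ is entire. By \ref{A_RD}, in the reformulated form of Section~\ref{s:algebraic}, the generalized kernel of $S_0(1/\omega^\uk,0)-\Id$ is spanned by $V^{\uu'}(0)$, so $1$ is an algebraically simple and automatically isolated eigenvalue; its left eigenvector is $V^{\uad_\uk}(0)$ (the last line of \eqref{e:expand-convert} shows $V^{\uad_\uk}$ solves the adjoint equation), and the pairing $\bil{V^{\uad_\uk}(0)}{V^{\uu'}(0)}=1/\uk$, which is constant in $t$ by \eqref{e:dual-convert} and evaluated through the time-averaging identity, confirms non-degeneracy. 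Kato's analytic perturbation theory then yields $\xi_0,\epsilon_0>0$ and the analytic simple curve $\lambda(\xi)$ with $\sigma(S_\xi(1/\omega^\uk,0))\cap B(1,\epsilon_0)=\{\lambda(\xi)\}$. Since the constructed eigenvalue $\eD^{(\nu(\xi)+\iD\xi\,\omega^\uk)/\omega^\uk}\to1$ as $\xi\to0$, it coincides with $\lambda(\xi)$ for $|\xi|$ small.

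It remains to expand $\nu(\xi)$ by Rayleigh--Schr\"odinger perturbation of $\cL_\xi=\cL+(\iD\xi\uk)\,\cL^{(1)}+(\iD\xi\uk)^2\,\cL^{(2)}+\cO(|\xi|^3)$, normalized by $\bil{\uad_\uk}{\uu'}=1$. At first order \eqref{e:k-profile} rewrites $\cL^{(1)}\uu'=-\uk\,\d_kc\,\uu'-\cL\,\d_k\uu$, so that $\bil{\uad_\uk}{\cL^{(1)}\uu'}=-\uk\,\d_kc=\uc+\d_k\omega^\uk$; since $\uk\,\uc=-\omega^\uk$, the contribution $\iD\xi\,\uk\,\uc/\omega^\uk=-\iD\xi$ cancels the $\iD\xi$ sitting in the exponent and leaves the group-velocity term $\iD\uk\xi\,\d_k\omega^\uk/\omega^\uk$. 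At second order the key point is that, thanks precisely to \ref{normal_RD}, the reduced-resolvent corrector may be taken to be $\d_k\uu$: indeed \eqref{e:k-profile} reads $\cL\,\d_k\uu=-(\cL^{(1)}-\nu^{[1]})\uu'$ with $\nu^{[1]}=-\uk\,\d_kc$, while $\bil{\uad_\uk}{\d_k\uu}=0$ is exactly \ref{normal_RD}. The second-order coefficient is then $\bil{\uad_\uk}{\cL^{(2)}\uu'}+\bil{\uad_\uk}{\cL^{(1)}\d_k\uu}$, which, after inserting the explicit symbols $\cL^{(1)},\cL^{(2)}$ obtained from $A^{(1)},A^{(2)}$, is exactly the diffusion coefficient $d(\uk)$ of \eqref{linvisc}. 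Gathering the two orders in $\eD^{(\nu(\xi)+\iD\xi\,\omega^\uk)/\omega^\uk}$ yields \eqref{eig_expand_RD}.

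The manipulations of the first two paragraphs are routine once the conversion identities are granted; the genuine difficulty, and the main obstacle, lies in the second-order step. One must check that the $1/\omega^\uk$-periodicity and the conversion bookkeeping really identify $\lambda(\xi)$ with the constructed mode, and, above all, that the reduced-resolvent corrector can be replaced by $\d_k\uu$ so that \ref{normal_RD} deletes the $\d_kc$-contribution and the surviving pairing collapses precisely onto \eqref{linvisc}. This is where the role of normalization \ref{normal_RD}, announced in Section~\ref{s:Whitham-RD}, becomes indispensable.
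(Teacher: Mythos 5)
Your proof is correct, but it takes a genuinely different route from the paper's. The paper perturbs the monodromy operator directly: it expands the relation $S_\xi(1/\omega^{\uk},0)\varphi_\xi=\lambda(\xi)\varphi_\xi$ in powers of $\iD\xi$ and computes $\lambda^{(1)}$ and $\lambda^{(2)}$ through the Duhamel representations \eqref{e:S-expand} of $S^{(1)}$ and $S^{(2)}$, i.e.\ through time integrals over one period evaluated by repeated use of the conversion identities and an integration by parts; the exponential in \eqref{eig_expand_RD} is then recovered only a posteriori by matching Taylor coefficients (whence the extra $\tfrac12(\uup\uk\d_k\omega^{\uk})^2$ in the paper's $\lambda^{(2)}$). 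You instead push the whole computation onto the profile operator: \eqref{e:convert} together with the $1/\omega^{\uk}$-periodicity of $V^{v_\xi}$ turns the Bloch eigenfunction of $\cL_\xi$ into an \emph{exact} eigenvector of $S_\xi(1/\omega^{\uk},0)$ with eigenvalue $\eD^{(\nu(\xi)+\iD\xi\omega^{\uk})/\omega^{\uk}}$, so the exponential form is exact and only $\nu(\xi)$ needs expanding, by Rayleigh--Schr\"odinger on $\cL_\xi$ --- which reproduces verbatim the algebra of Subsection~\ref{s:Whitham-RD}. Your identifications check out: $\nu^{[1]}=-\uk\,\d_kc$ cancels the spurious $\iD\xi$; the corrector may be taken to be $\d_k\uu$ thanks to \eqref{e:k-profile}, and its choice is in any case immaterial since $\bil{\uad_\uk}{(\cL^{(1)}-\nu^{[1]})\,\d_\zeta\uu}=0$; and $\nu^{[2]}=\bil{\uad_\uk}{\cL^{(2)}\d_\zeta\uu}+\bil{\uad_\uk}{(\cL^{(1)}-\uc)\d_k\uu}=d(\uk)$ once \ref{normal_RD} kills the term $\uc\,\bil{\uad_\uk}{\d_k\uu}$. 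What your route buys is a cleaner second order (no time integrals, exact exponential); what the paper's route buys is robustness: working directly on $S_\xi$ is what survives in Theorems~\ref{propMIX} and~\ref{propHAM}, where $1$ is no longer a semisimple eigenvalue of $S_0$ and one must reduce to a matrix and desingularize a Jordan block, so a single analytic eigenfunction $v_\xi$ of $\cL_\xi$ is no longer available. Two points you should make explicit to close the argument: $V^{v_\xi}(0)\neq0$ (it tends to $V^{\d_\zeta\uu}(0)$, nonzero by the reformulation of \ref{A_RD} in Subsection~\ref{s:algebraic}, and $v_\xi\in H^1_{per}$ makes the point evaluations defining $V^{v_\xi}(0)$ continuous in $\xi$), and the algebraic --- not merely geometric --- simplicity of the eigenvalue $0$ of $\cL$, which follows from the one-dimensional kernel, the Fredholm property and $\bil{\uad_\uk}{\d_\zeta\uu}=1$, exactly as the paper reads \ref{A_RD}.
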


The above theorem has many direct implications. For instance, we readily see that $d(\uk)>0$ yields a linear exponential instability caused by side-band perturbations, that is, by perturbations with arbitrary small --- but non zero --- spatial Floquet exponents. It also proves as expected that the single linear group velocity is indeed $\d_k\omega^{\uk}$.

\begin{proof}
Assumption \ref{A_RD} implies that $1$ is a simple eigenvalue of $S_0(1/\uom,0)$ so that the only point that does not follow from classical results on regular perturbations of simple eigenvalues is expansion \eqref{eig_expand_RD}. To identify coefficients we expand 
$$
\lambda(\xi) \ \stackrel{\xi\to0}{=}\ 1\,+\,\iD\,\xi\ \lambda^{(1)}\,+\,(\iD\,\xi)^2\ \lambda^{(2)}\,+\,\cO(|\xi|^3)\,,
$$
and a corresponding eigenvector $\varphi_\xi$, which also depends analytically on $\xi$, as
$$
\varphi_\xi\ \stackrel{\xi\to0}{=}\ V^{\d_\zeta\uu}(0)\,+\,\iD\xi\ \varphi^{(1)}\,+\,(\iD\xi)^2\ \varphi^{(2)}\,+\,\cO(|\xi|^3)\,.
$$
Setting $\uup=1/\omega^{\uk}$ for writing convenience, we now expand relation $S_\xi(\uup,0)\varphi_\xi=\lambda(\xi)\varphi_\xi$. 

At first order in $\iD\xi$ we receive
\be\label{first_order_RD}
(S_0(\uup,0)-\Id)\,\varphi^{(1)}\ =\ \lambda^{(1)}\ V^{\d_\zeta\uu}(0)-S^{(1)}(\uup,0)V^{\d_\zeta\uu}(0)\,.
\ee
%Yet from \eqref{e:k-profile} and \eqref{e:convert} we know that
%$$
%\forall t\in\R\,,\ (V^{\d_k\uu})\dot{\,}(t)\ =\ A_0(t)\,V^{\d_k\uu}(t)+\d_k\omega^{\uk}\ V^{\d_\zeta\uu}(t)+A^{(1)}(t)\,V^{\d_\zeta\uu}(t)\,,
%$$
%Therefore, 
%$$
%\begin{array}{rcl}
%\displaystyle
%\forall (t,s)\in\R^2\,,\ V^{\d_k\uu}(t)&=&\displaystyle
%S_0(t,s)\,V^{\d_k\uu}(s)+\int_s^t\d_k\omega^{\uk}\ S_0(t,\sigma)V^{\d_\zeta\uu}(\sigma)\dD \sigma\\[1em]
%&&\quad\displaystyle
%+\frac1\uk \int_s^t S_0(t,\sigma)A^{(1)}(\sigma)V^{\d_\zeta\uu}(\sigma)\dD \sigma
%\\[1em]
%&=&\displaystyle
%S_0(t,s)\,V^{\d_k\uu}(s)+(t-s)\,\d_k\omega^{\uk}\ V^{\d_\zeta\uu}(t)+\frac1\uk S^{(1)}(t,s)\,V^{\d_\zeta\uu}(s)\,.
%\end{array}
%$$
Hence
$$
(S_0(\uup,0)-\Id)\,(\varphi^{(1)}-\uk V^{\d_k\uu}(0))\ =\ \left(\lambda^{(1)}-\uk\uup\d_k\omega^{\uk}\right)\ V^{\d_\zeta\uu}(0)\,.
$$
Since $1$  is a simple eigenvalue of $S_0(\uup,0)$, $V^{\d_\zeta\uu}(0)$ does not belong to the image of $S_0(\uup,0)-\Id$. From this we infer the desired 
$$
\lambda^{(1)}\ =\ \uk\uup\d_k\omega^{\uk}
\qquad\textrm{and}\qquad
\varphi^{(1)}-\uk V^{\d_k\uu}(0)\in \ker (S_0(\uup,0)-\Id)=\C V^{\d_\zeta\uu}(0)\,.
$$

By expanding now at the second order in $\iD\xi$ we obtain
\be\label{second_oder_RD}
(S_0(\uup,0)-\Id)\,\varphi^{(2)}\ =\ \lambda^{(2)}\ V^{\d_\zeta\uu}(0)
+\uk\uup\d_k\omega^{\uk}\,\varphi^{(1)}
-S^{(1)}(\uup,0)\varphi^{(1)}\,-\,S^{(2)}(\uup,0)V^{\d_\zeta\uu}(0)\,.
\ee
To proceed we also need to involve $\uad_\uk$ defined in Subsection~\ref{s:Whitham-RD}, that is $\uad_\uk$ spans $\ker\cL^*$ and $\langle\uad_\uk,\d_\zeta\uu^{\uk}\rangle=1$ (in the $L_{per}^2([0,1])$ sense). From \eqref{e:convert} we deduce that 
$$
\forall (t,s)\,,\quad S_0(s,t)^*\,\Vad(t)\ =\ \Vad(s)\,,
$$
thus that $V^{\uad_\uk}(0)\in\ker (S_0(\uup,0)^*-\Id)$ and from \eqref{e:dual-convert} that
$$
\forall t\,,\ \langle V^{\uad_\uk}(t),V^{\d_\zeta\uu}(t)\rangle
\ =\ \frac1\uup\int_0^{\uup}\langle V^{\uad_\uk}(s),V^{\d_\zeta\uu}(s)\rangle\,\dD s
\ =\ \frac1\uk\langle\uad_\uk,\d_\zeta\uu^{\uk}\rangle\ =\ \frac1\uk\,.
$$
Since $-\uk\uup\d_k\omega^{\uk}\,V^{\d_\zeta\uu}(0)+S^{(1)}(\uup,0)V^{\d_\zeta\uu}(0)$ belongs to the range of $S_0(\uup,0)-\Id$ this yields
$$
\tfrac1\uk\lambda^{(2)}\ =\ \langle V^{\uad_\uk}(0),-\uk\uup\d_k\omega^{\uk}\,V^{\d_k\uu}(0)
\,+\,\uk S^{(1)}(\uup,0)V^{\d_k\uu}(0)\,+\,S^{(2)}(\uup,0)V^{\d_\zeta\uu}(0)\rangle\,.
$$
Now with \eqref{e:S-expand} we compute
$$
\begin{array}{rcl}
\langle V^{\uad_\uk}(0)&\hspace{-1em},&\hspace{-1em}\uk S^{(1)}(\uup,0)V^{\d_k\uu}(0)\,+\,S^{(2)}(\uup,0)V^{\d_\zeta\uu}(0)\rangle\\[1em]
&=&\displaystyle
\int_0^\uup \langle V^{\uad_\uk}(s),A^{(1)}(s)[\uk S_0(s,0)V^{\d_k\uu}(0)\,+\,S^{(1)}(s,0)V^{\d_\zeta\uu}(0)]\,+\,A^{(2)}(s)V^{\d_\zeta\uu}(s)\rangle\ \dD s\\[1em]
&=&\displaystyle
\int_0^\uup \langle V^{\uad_\uk}(s),A^{(1)}(s)[\uk\,s\,\d_k\omega^{\uk}\ V^{\d_\zeta\uu}(s)\,+\,\uk V^{\d_k\uu}(s)]\,+\,A^{(2)}(s)V^{\d_\zeta\uu}(s)\rangle\ \dD s\\[1em]
&=&\displaystyle
\int_0^\uup \langle V^{\uad_\uk}(s),\uk^2\,s\,\d_k\omega^{\uk}\ V^{(\cL^{(1)}-\uc)\d_\zeta\uu}(s)\,+\,\uk^2 V^{(\cL^{(1)}-\uc)\d_k\uu}(s)]\,+\,\uk^2V^{\cL^{(2)}\d_\zeta\uu}(s)\rangle\ \dD s\,.
\end{array}
$$
Now, since $(\cL^{(1)}-\uc)\d_\zeta \uu=\d_k\omega\d_\zeta\uu-\cL\d_k\uu$, by using \eqref{e:expand-convert} and normalization \ref{normal_RD} and integrating by part, we observe that
$$
\int_0^\uup \langle V^{\uad_\uk}(s),\uk^2\,s\,\d_k\omega^{\uk}\ V^{(\cL^{(1)}-\uc)\d_\zeta\uu}(s)\rangle\ \dD s\ =\ 
\frac1\uk\ \left[\frac12\uk^2\uup^2(\d_k\omega^{\uk})^2+\uk^2\uup\d_k\omega^{\uk}\,\langle V^{\uad_\uk}(0),V^{\d_k\uu}(0)\rangle\right]
$$
which yields
$$
\lambda^{(2)}\ =\ \uup\uk^2\,d(\uk)\,+\,\tfrac12\,(\uup\uk\d_k\omega^{\uk})^2
$$
as desired.
\end{proof}

\subsection{General mixed case}

Now our main goal is to unravel the role of \eqref{WhithamCON} in the spectral analysis of the linearization of \eqref{disCON}. 

\begin{theorem}\label{propMIX}
Assuming that condition \ref{A_mixed} holds in a neighborhood of $\uu^{\uk,\ubM}$, there exist positive $\epsilon_0$ and $\xi_0\in(0,\pi/N)$ and continuous curves $\lambda_\alpha:[-\xi_0,\xi_0]\to\C$,  $\alpha =1,\cdots, d_1+1$, such that
$$
\forall \xi\in[-\xi_0,\xi_0]\,,\ \sigma(S_\xi(1/\omega^{\uk,\ubM},0))\,\cap\,B(1,\epsilon_0)\ =\ \{\lambda_1(\xi),\cdots,\lambda_{d_1+1}(\xi)\}\,.
$$
Moreover, for $\xi\in[-\xi_0,\xi_0]$ and $\alpha =1,\cdots, d_1+1$
\be\label{eig_expand_MIX}
\lambda_\alpha(\xi)\ \stackrel{\xi\to0}{=}\ \eD^{\frac{1}{\omega^{\uk,\ubM}}\,\iD\uk\xi\,a_\alpha}\ +\ \cO(|\xi|^2)\,,
\ee
where $a_1,\cdots,a_{d_1+1}$ are the characteristic speeds of the Whitham system \eqref{WhithamCON} linearized about parameters $(\uk,\ubM)$.
\end{theorem}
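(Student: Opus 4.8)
The plan is to mirror the proof of Theorem~\ref{propRD}, replacing its scalar Lyapunov--Schmidt reduction by a reduction onto a $(d_1+1)$-dimensional invariant subspace and inserting a desingularization step to handle the Jordan block that the conservative structure produces at $\xi=0$. Writing $\uup=1/\omega^{\uk,\ubM}$, I would first use assumption~\ref{A_mixed} and the correspondence \eqref{e:a-profile}--\eqref{e:convert} to describe the generalized kernel of $S_0(\uup,0)-\Id$: it is spanned by $V^{\d_\zeta\uu}(0)$ and the $V^{\d_{M_j}\uu}(0)$, $j=1,\dots,d_1$, and \eqref{e:a-profile} gives $V^{\d_\zeta\uu}(0)\in\ker(S_0(\uup,0)-\Id)$ together with $(S_0(\uup,0)-\Id)\,V^{\d_{M_j}\uu}(0)=\uup\,\d_{M_j}\omega\,V^{\d_\zeta\uu}(0)$. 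Hence the restriction $N$ of $S_0(\uup,0)-\Id$ to this space is nilpotent of rank one, displaying (when $\d_\bM\omega\neq0$) a single Jordan block of size two --- coupling the phase direction to the mass directions --- together with $d_1-1$ trivial blocks. On the dual side, the constant functions $e_1,\dots,e_{d_1}$ lying in $\ker\cL^*$ yield, through the second line of \eqref{e:convert}, genuine left eigenvectors $V^{e_i}(0)\in\ker(S_0(\uup,0)^*-\Id)$; these are biorthogonal to the mass directions, while the remaining left generalized vector is dual to the phase. Regular perturbation theory (cf.\ \cite{Kato}) then provides an analytic rank-$(d_1+1)$ spectral projection $\Pi_\xi$ and a reduced operator $\cM(\xi):=S_\xi(\uup,0)|_{\Range\Pi_\xi}$ analytic in $\xi$, with $\cM(0)=\Id+N$; the $d_1+1$ continuous branches $\lambda_\alpha(\xi)$ are its eigenvalues, so everything reduces to the $\xi\to0$ behavior of $\sigma(\cM(\xi))$.

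Writing $\cM(\xi)=\Id+N+\iD\xi\,\cM^{(1)}+\cO(\xi^2)$ with $\cM^{(1)}$ the first-order term coming from $S^{(1)}(\uup,0)$ via \eqref{e:S-expand}, I would analyze the eigenvalues $\mu=\lambda-1$ of $N+\iD\xi\,\cM^{(1)}$. The $d_1-1$ trivial blocks are treated by the same first-order solvability argument as in Theorem~\ref{propRD}: pairing the order-$\xi$ equation $(S_0(\uup,0)-\Id)\varphi^{(1)}=\mu\,\varphi_0-S^{(1)}(\uup,0)\varphi_0$ against the $V^{e_i}(0)$ gives $\mu\,\langle V^{e_i}(0),\varphi_0\rangle=\langle V^{e_i}(0),S^{(1)}(\uup,0)\varphi_0\rangle$, and using \eqref{e:expand-convert} together with the definitions of $\bM_0$ and of the flux $F(k,\bM)=\eta\int_0^1 f_r(\uu^{k,\bM})$ I expect to recognize the mass-flux entries $\d_k F$, $\d_\bM F$. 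The genuinely new phenomenon is the size-two block, which generically would split like $\sqrt\xi$ --- incompatible with the analytic leading order claimed in \eqref{eig_expand_MIX}.

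The main obstacle, and the heart of the matter, is therefore to show that this block unfolds at rate $\xi$ rather than $\sqrt\xi$. The decisive fact is that the back-coupling entry of the block, namely $\langle V^{e_i}(0),S^{(1)}(\uup,0)V^{\d_\zeta\uu}(0)\rangle$, vanishes for every $i$. I would prove this as the desingularization step: the relation \eqref{e:k-S} lets me replace $\tfrac1\uk S^{(1)}(\uup,0)V^{\d_\zeta\uu}(0)$ by $(\Id-S_0(\uup,0))V^{\d_k\uu}(0)+\uup\,\d_k\omega\,V^{\d_\zeta\uu}(0)$; since $S_0(\uup,0)^*V^{e_i}(0)=V^{e_i}(0)$ the first summand is annihilated in the pairing, and the second contributes $\uk\uup\,\d_k\omega\,\langle V^{e_i}(0),V^{\d_\zeta\uu}(0)\rangle$, which is zero because $\langle V^{e_i}(0),V^{\d_\zeta\uu}(0)\rangle=\tfrac1\uk\int_0^1 e_i\cdot\uu'=0$ --- the average of a derivative vanishes, which is precisely where the conservative form of $\bD_1$ enters. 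With this vanishing the quadratic factor attached to the block degenerates to $\mu^2-\iD\xi(m_{11}+m_{22})\mu+\cO(\xi^2)=0$, so both of its roots are $\cO(\xi)$. Equivalently, I would introduce the $\xi$-adapted basis of $\Range\Pi_\xi$ given by $\Pi_\xi V^{\d_{M_j}\uu}(0)$ and $\Pi_\xi V^{\d_k\uu}(0)$ --- letting the $k$-direction take over the role of the phase and thereby converting the $(\zeta,\bM)$ parametrization of the kernel into the $(k,\bM)$ parametrization of \eqref{WhithamCON} --- and show that $\tfrac1{\iD\uk\uup\xi}(\cM(\xi)-\Id)$ extends continuously to $\xi=0$.

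Finally, I would identify that continuous limit with the matrix $\cW$ of the linearization of \eqref{WhithamCON} about $(\uk,\ubM)$. Here the bookkeeping splits along the duality already set up: pairing against the $V^{e_i}(0)$ reproduces the mass-conservation rows $(\d_k F,\d_\bM F)$, whereas pairing against the left generalized vector dual to the phase --- and re-expressing $S^{(1)}(\uup,0)V^{\d_\zeta\uu}(0)$ through \eqref{e:k-S} --- reproduces the wave-conservation row $(\d_k\omega,\d_\bM\omega)$. Once the limit is $\cW$, the eigenvalues of the rescaled reduced matrix converge to the characteristic speeds $a_1,\dots,a_{d_1+1}$, whence $\lambda_\alpha(\xi)=1+\iD\uk\uup\xi\,a_\alpha+\cO(\xi^2)=\eD^{\frac1{\omega^{\uk,\ubM}}\iD\uk\xi a_\alpha}+\cO(\xi^2)$, and the continuity of the $\lambda_\alpha$ follows from continuity of $\sigma(\cM(\xi))$. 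I expect the genuinely delicate point to remain the desingularization; identifying each entry of $\cW$ with the correct derivative of $\omega$ and $F$ is then a (lengthy) verification along the lines of \eqref{lingroup_RD} and \eqref{e:expand-convert}.
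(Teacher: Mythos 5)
Your proposal is correct and follows essentially the same route as the paper: reduction via Kato-continued dual bases of the generalized kernels of $S_0(1/\uom,0)-\Id$ and its adjoint to a $(d_1+1)\times(d_1+1)$ matrix $\Omega_\xi$, the key cancellation of the back-coupling entries $\langle V^{e_\alpha}(0),S^{(1)}(1/\uom,0)V^{\d_\zeta\uu}(0)\rangle$ obtained from \eqref{e:k-S} together with the left-invariance of $V^{e_\alpha}$, the rescaling by $\Sigma_\xi=\diag((\iD\uk\xi)^{-1},\I)$ that trades the phase coordinate for a wavenumber coordinate, and the identification of the resulting analytic limit with the linearized Whitham matrix. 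The only minor quibble is attributional: the vanishing of $\langle e_\alpha,\d_\zeta\uu\rangle$ is just periodicity of the profile, while the conservative form of $\bD_1$ is what places $e_\alpha$ in $\ker\cL^*$ and hence makes $V^{e_\alpha}$ a genuine left eigenvector in the first place.
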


In contrast with what happens in the reaction-diffusion case, that is averaged to a scalar equation, the above first-order expansion already provides an instability criterion. If the Whitham system \eqref{WhithamCON} is not weakly hyperbolic at $(\uk,\ubM)$, that is, if some of the $a_\alpha$ are not real, then the corresponding wave is linearly exponentially instable to side-band perturbations. In contrast, when weak hyperbolicity is met, then the above theorem proves that those $a_\alpha$ do give the $d_1+1$ linear group velocities, a non trivial fact.

\begin{proof}
Let us denote by $e_1,\cdots,e_{d_1}$ both the $d_1$ first members of the canonical basis of $\C^d$ and the constant functions with corresponding values. As a consequence of assumption \ref{A_mixed}, there exists $\uad$ belonging to the generalized kernel of $\cL^*$, orthogonal to $\d_{\bM_1}\uu,\cdots,\d_{\bM_{d_1}}\uu$ and such that $\langle\uad,\d_\zeta\uu\rangle=1$. Moreover, by appealing to Subsection~\ref{s:algebraic} as in the reaction-diffusion case, one obtains that $(V^{\d_\zeta\uu},V^{\d_{\bM_1}\uu},\cdots,V^{\d_{\bM_{d_1}}\uu})$ and $(\uk V^{\uad},\uk V^{e_1},\cdots,\uk V^{e_{d_1}})$ form dual bases of the generalized kernels of $S_0(1/\omega^{\uk,\ubM},0)-\Id$ and $S_0(1/\omega^{\uk,\ubM},0)^*-\Id$.

By Kato's perturbation method \cite[pp.~99-100]{Kato} we may continue these bases as dual bases $(q_0(\xi),\cdots,q_{d_1}(\xi))$ and $(\tq_0(\xi),\cdots,\tq_{d_1}(\xi))$ of the generalized eigenspaces of $S_\xi(1/\omega^{\uk,\ubM},0)$ and $S_\xi(1/\omega^{\uk,\ubM},0)^*$ associated with their spectra in $B(1,\epsilon_0)$ as long as $|\xi|\leq \xi_0$ provided $\xi_0$ and $\epsilon_0$ are small enough. Then, for $|\xi|\leq\xi_0$, the spectrum of $S_\xi(1/\omega^{\uk,\ubM},0)-\Id$ in $B(0,\epsilon_0)$ is the spectrum of the matrix
$$
\Omega_\xi\ =\ \left[\left\langle\tq_\alpha(\xi), \left(S_\xi(1/\omega^{\uk,\ubM},0)-\Id\right)q_\beta(\xi)\right\rangle\right]_{0\leq\alpha,\beta\leq d_1}\,.
$$
Observe that 
$$
\Omega_0\ =\ \begin{pmatrix}0&\uup\d_{\bM_1}\omega&\cdots&\uup\d_{\bM_{d_1}}\omega\\
\vdots&&0_{d_1\times d_1}&\\
0&&&\end{pmatrix}
$$
where $\uup=1/\omega^{\uk,\ubM}$. Expanding 
$$
\Omega_\xi\ =\ \Omega_0\,+\,\iD\uk\xi\,\Omega^{(1)}\,+\,(\iD\uk\xi)^2\,\Omega^{(2)}\,+\,\cO(|\xi|^3)\,,
$$
we also obtain since 
$$
\tfrac1\uk S^{(1)}(\uup,0)\,V^{\d_\zeta\uu}(0)\ =\ 
-(S_0(\uup,0)-\Id)\,V^{\d_k\uu}(0)+\uup\,\d_k\omega^{\uk}\ V^{\d_\zeta\uu}(0)
$$
that
$$
\Omega^{(1)}\ =\ \begin{pmatrix}*&\cdots&*\\
0&&\\
\vdots&*&\\
0&&
\end{pmatrix}\,.
$$
The upshot is that the matrix $\tilde\Omega_\xi$ defined by
$$
\tilde\Omega_\xi\ :=\ \frac{1}{\iD\uk\xi}\Sigma_\xi^{-1}\Omega_\xi\Sigma_\xi
\qquad\textrm{with}\qquad
\Sigma_\xi\ =\ 
\begin{pmatrix}(\iD\uk\xi)^{-1}&O_{1\times d_1}\\
0_{d_1\times 1}&\I_{d_1\times d_1}\end{pmatrix}
$$
depends analytically on $\xi$. The heuristics behind this transformation is that the change of basis transforms a phase-like coordinate to a wavenumber-like coordinate as in the derivation of the Whitham system \eqref{WhithamCON} while the division by $\iD\uk\xi$ changes eigenvalues in velocities.

Now the point is to identify $\tilde\Omega_0$. We already know that
$$
\tilde\Omega_0\ =\ \left(\begin{array}{c|c}(\Omega^{(1)})_{\alpha,\beta}&(\Omega_0)_{\alpha,\beta}\\\hline
(\Omega^{(2)})_{\alpha,\beta}&(\Omega^{(1)})_{\alpha,\beta}\end{array}\right)_{0\leq\alpha,\beta\leq d_1}\,.
$$
and, for $\beta=1,\cdots,d_1$, $(\Omega_0)_{1,\beta}=\uup\d_{\bM_\beta}\omega$. First 
$$
\begin{array}{rcl}
(\Omega^{(1)})_{0,0}&=&\displaystyle
\tfrac{1}{\iD\uk}\langle \tq_0(0),\left(S_0(\uup,0)-\Id\right)\d_\xi q_0(0)\rangle
\,+\,\tfrac{1}{\uk}\langle \tq_0(0),S^{(1)}(\uup,0) q_0(0)\rangle\\[1em]
&=&\displaystyle
\uup\d_k\omega\,+\,\tfrac{1}{\iD\uk}\langle \tq_0(0),\left(S_0(\uup,0)-\Id\right)(\d_\xi q_0(0)-\iD\uk V^{\d_k\uu}(0))\rangle\,.
\end{array}
$$
Our claim is that the last term vanishes. Indeed since $(S_0(\uup,0)-\Id)q_0(0)=0$ and, for $|\xi|\leq\xi_0$,
$$
(S_\xi(\uup,0)-\Id)q_0(\xi)\ \in\ \textrm{Span}(\{q_0(\xi),\cdots,q_{d_1}(\xi)\})
$$
we conclude
$$
\tfrac{1}{\iD\uk}(S_0(\uup,0)-\Id)\d_\xi q_0(0)
\,+\,\tfrac1\uk S^{(1)}(\uup,0)q_0(0)\ \in\ \textrm{Span}(\{q_0(0),\cdots,q_{d_1}(0)\})
$$
hence
$$
\left(S_0(\uup,0)-\Id\right)(\d_\xi q_0(0)-\iD\uk V^{\d_k\uu}(0))
\in\ \textrm{Span}(\{q_0(0),\cdots,q_{d_1}(0)\})\,.
$$
This implies
$$
\d_\xi q_0(0)-\iD\uk V^{\d_k\uu}(0)
\in\ \textrm{Span}(\{q_0(0),\cdots,q_{d_1}(0)\})
$$
which in turn yields 
$$
\left(S_0(\uup,0)-\Id\right)(\d_\xi q_0(0)-\iD\uk V^{\d_k\uu}(0))
\in\ \textrm{Span}(\{q_1(0),\cdots,q_{d_1}(0)\})\,,
$$
proving the cancellation. To make the remaining computations easier we pick $(z_1,\cdots,z_{d_1})$ such that 
$$
\d_\xi q_0(0)-\iD\uk V^{\d_k\uu}(0)\ =\ \sum_{1\leq \alpha\leq d_1} z_\alpha q_\alpha(0)
$$
and replace $(q_0(\xi),\tq_1(\xi),\cdots,\tq_{d_1}(\xi))$ with $(q_0(\xi)-\xi\sum_{\alpha=1}^{d_1}z_\alpha q_\alpha(\xi),\tq_1(\xi)+\xi z_1 \tq_0(\xi),\cdots,\tq_{d_1}(\xi)+\xi z_{d_1} \tq_0(\xi))$. This ensures $\d_\xi q_0(0)=\iD\uk V^{\d_k\uu}(0)$.

With this in hands we may also compute, for $1\leq\alpha,\beta\leq d_1$,
$$
\begin{array}{rcl}
(\Omega^{(1)})_{\alpha,\beta}&=&\displaystyle
\tfrac{1}{\iD\uk}\langle \d_\xi\tq_\alpha(0),\left(S_0(\uup,0)-\Id\right)q_\beta(0)\rangle
\,+\,\tfrac{1}{\uk}\langle \tq_\alpha(0),S^{(1)}(\uup,0) q_\beta(0)\rangle\\[1em]
&=&\displaystyle
\tfrac{\uup\d_{\bM_\beta}\omega}{\iD\uk}\langle \d_\xi\tq_\alpha(0),q_0(0)\rangle
\,+\,\tfrac{1}{\uk}\langle \tq_\alpha(0),S^{(1)}(\uup,0) q_\beta(0)\rangle\\[1em]
&=&\displaystyle
-\tfrac{\uup\d_{\bM_\beta}\omega}{\iD\uk}\langle\tq_\alpha(0),\d_\xi q_0(0)\rangle
\,+\,\tfrac{1}{\uk}\langle \tq_\alpha(0),S^{(1)}(\uup,0) q_\beta(0)\rangle\\[1em]
&=&\displaystyle
-\uup\d_{\bM_\beta}\omega\langle\tq_\alpha(0),V^{\d_k\uu}(0)\rangle
\,+\,\tfrac{1}{\uk}\langle \tq_\alpha(0),S^{(1)}(\uup,0) q_\beta(0)\rangle\,.
\end{array}
$$
Now using that, for any $(s,t)$, 
$$
S_0(t,s)V^{\d_{\bM_\beta}\uu}(s)\ =\ V^{\d_{\bM_\beta}\uu}(t)+(t-s)\d_{\bM_\beta}\omega V^{\d_\zeta\uu}(t)\,,
$$
we go further to obtain
$$
\tfrac{1}{\uk}\langle \tq_\alpha(0),S^{(1)}(\uup,0) q_\beta(0)\rangle
\ =\ \int_0^\uup\langle V^{e_\alpha}(t),A^{(1)}(t) 
[V^{\d_{\bM_\beta}\uu}(t)+t\d_{\bM_\beta}\omega V^{\d_\zeta\uu}(t)]\rangle\ \dD t\
$$
with
$$
\begin{array}{rcl}
\displaystyle
\int_0^\uup\langle V^{e_\alpha}(t),A^{(1)}(t) V^{\d_{\bM_\beta}\uu}(t)\rangle\ \dD t
&=&\displaystyle
\uup\,\langle  e_\alpha,(\cL^{(1)}-\uc)\d_{\bM_\beta}\uu\rangle\\[1em]
&=&\displaystyle
\uup\,\int_0^1 \eta e_\alpha\cdot\dD f(\uu(\zeta))\d_{\bM_\beta}\uu(\zeta)\dD\zeta\\[1em] 
&=&\displaystyle
-\uup\,\d_{\bM_\beta} F_\alpha(\uk,\uM)
\end{array}
$$
and, by using $(\cL^{(1)}-\uc)\d_\zeta \uu=\d_k\omega\d_\zeta\uu-\cL\d_k\uu$ and integrating by part,
$$
\begin{array}{rcl}
\displaystyle
\int_0^\uup t\,\langle V^{e_\alpha}(t),A^{(1)}(t) V^{\d_\zeta\uu}(t)]\rangle\ \dD t
&=&\displaystyle
-\uk\int_0^\uup t\,\langle V^{e_\alpha}(t),V^{\cL\d_k\uu}(t)]\rangle\ \dD t\\[1em]
&=&\displaystyle
\uk\int_0^\uup t\,\langle V^{e_\alpha}(t),\left(\tfrac{\dD}{\dD t}-A_0(t))\right)V^{\d_k\uu}(t)]\rangle\ \dD t\\[1em]
&=&\displaystyle
\uk\,\uup\, \langle V^{e_\alpha}(0),V^{\d_k\uu}(0)\rangle\,.
\end{array}
$$
Hence, for $1\leq\alpha,\beta\leq d_1$,
$$
(\Omega^{(1)})_{\alpha,\beta}\ =\ -\uup\,\d_{\bM_\beta} F_\alpha(\uk,\uM)\,.
$$

Lastly we compute, for $1\leq\alpha\leq d_1$,
$$
\begin{array}{rcl}
(\Omega^{(2)})_{\alpha,0}
&=&\displaystyle
\tfrac{1}{(\iD\uk)^2}\langle \d_\xi\tq_\alpha(0),\left(S_0(\uup,0)-\Id\right)\d_\xi q_0(0)\rangle
\,+\,\tfrac{1}{\iD\uk^2}\langle \tq_\alpha(0),S^{(1)}(\uup,0) \d_\xi q_0(0)\rangle\\[1em]
&&\displaystyle
+\,\tfrac{1}{\iD\uk^2}\langle \d_\xi\tq_\alpha(0),S^{(1)}(\uup,0)\,q_0(0)\rangle
\,+\,\tfrac{1}{\uk^2}\langle \tq_\alpha(0),S^{(2)}(\uup,0)\,q_0(0)\rangle\,.
\end{array}
$$
On one hand, integrating by parts,
$$
\begin{array}{rcl}
\tfrac{1}{\iD\uk^2}\hspace{-1em}&\hspace{-1em}&\hspace{-1em}\langle \d_\xi\tq_\alpha(0),S^{(1)}(\uup,0)\,q_0(0)\rangle
\ =\ \displaystyle
\tfrac{1}{\iD\uk^2}\int_0^\uup \langle \d_\xi\tq_\alpha(0),S_0(\uup,t)A^{(1)}(t)V^{\d_\zeta\uu}(t)\rangle\,\dD t\\[1em]
&=&\displaystyle
\tfrac{1}{\iD\uk}\int_0^\uup \langle \d_\xi\tq_\alpha(0),S_0(\uup,t)\d_k\omega\,V^{\d_\zeta\uu}(t)\rangle\,\dD t
\,+\,\tfrac{1}{\iD\uk}
\int_0^\uup \langle \d_\xi\tq_\alpha(0),S_0(\uup,t)\left(\tfrac{\dD}{\dD t}-A_0(t)\right)V^{\d_k\uu}(t)\rangle\,\dD t
\\[1em]
&=&\displaystyle
\tfrac{1}{\iD\uk}\left[\uup\d_k\omega\,\langle \d_\xi\tq_\alpha(0),V^{\d_\zeta\uu}(0)\rangle
-\tfrac{1}{\iD\uk}\langle\d_\xi\tq_\alpha(0),\left(S_0(\uup,0)-\Id\right)\d_\xi q_0(0)\rangle\right]\,.
\end{array}
$$
On the other hand, using \eqref{e:k-S},
$$
\begin{array}{rcl}
\tfrac{1}{\uk^2}\hspace{-1em}&\hspace{-1em}&\hspace{-1em}\langle \tq_\alpha(0),[\tfrac{1}{\iD}S^{(1)}(\uup,0)\d_\xi q_0(0)
+S^{(2)}(\uup,0)\,q_0(0)]\rangle\\[1em]
&=&\displaystyle
\int_0^\uup \langle  V^{e_\alpha}(t),A^{(1)}(t)\left[V^{\d_k\uu}(t)+t\d_k\omega\,V^{\d_\zeta\uu}(t)\right]\rangle\,\dD t
\,+\,\tfrac{1}{\uk}\int_0^\uup \langle  V^{e_\alpha}(t),A^{(2)}(t)\,V^{\d_\zeta\uu}(t)\rangle\,\dD t\\[1em]
&=&\displaystyle
\uup\,\langle  e_\alpha,(\cL^{(1)}-\uc)\d_k\uu+\cL^{(2)}\d_\zeta\uu\rangle
\,+\,\uup\,\d_k\omega\ \tfrac{1}{\iD\uk}\langle \tq_\alpha(0),\d_\xi q_0(0)\rangle\,.
\end{array}
$$
Hence
$$
(\Omega^{(2)})_{\alpha,0}\ =\ 
\uup\,\langle  e_\alpha,(\cL^{(1)}-\uc)\d_k\uu+\cL^{(2)}\d_\zeta\uu\rangle
\ =\ -\uup\,\d_k F_\alpha(\uk,\uM)
$$
so that as expected
$$
\tilde\Omega_0\ =\ \uup\,\left(\begin{array}{c|ccc}\d_k\omega
&\d_{\bM_1}\omega&\cdots&\d_{\bM_{d_1}}\omega\\[0.5em]\hline &&&\\[-0.5em]
-\d_k F_\alpha(\uk,\uM)&-\d_{\bM_1} F_\alpha(\uk,\uM)&\cdots&-\d_{\bM_{d_1}} F_\alpha(\uk,\uM)\end{array}\right)\,.
$$
\end{proof}

\subsection{Hamiltonian case}

Now we turn our attention to the role of \eqref{WhithamHAM} in the spectral analysis of the linearization of \eqref{disHam}. Our analysis hereafter follows closely the proof of the foregoing subsection. The only difference stems from the presence of one extra "hidden" conservation law.

\begin{theorem}\label{propHAM}
Assuming that condition \ref{A_hamilton} holds in a neighborhood of $\uu^{\uk,\ubM,\uE}$, there exist positive $\epsilon_0$ and $\xi_0\in(0,\pi/N)$ and continuous curves $\lambda_\alpha:[-\xi_0,\xi_0]\to\C$,  $\alpha =1,\cdots, d+2$, such that
$$
\forall \xi\in[-\xi_0,\xi_0]\,,\ \sigma(S_\xi(1/\omega^{\uk,\ubM},0))\,\cap\,B(1,\epsilon_0)\ =\ \{\lambda_1(\xi),\cdots,\lambda_{d+2}(\xi)\}\,.
$$
Moreover, for $\xi\in[-\xi_0,\xi_0]$ and $\alpha =1,\cdots, d+2$
\be\label{eig_expand_HAM}
\lambda_\alpha(\xi)\ \stackrel{\xi\to0}{=}\ \eD^{\frac{1}{\omega^{\uk,\ubM}}\,\iD\uk\xi\,a_\alpha}\ +\ \cO(|\xi|^2)\,,
\ee
where $a_1,\cdots,a_{d+2}$ are the characteristic speeds of the Whitham system \eqref{WhithamHAM} linearized about parameters $(\uk,\ubM,\uE)$.
\end{theorem}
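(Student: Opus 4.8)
The plan is to replay the argument of Theorem~\ref{propMIX} almost verbatim, the sole structural change being that the extra energy conservation law \eqref{disConLoc} enlarges by one the generalized kernel that must be tracked. Throughout, write $\uup$ for the period $1/\omega^{\uk,\ubM,\uE}$ of the background wave. As in the mixed case, assumption~\ref{A_hamilton}, together with \eqref{e:a-profile}, \eqref{e:k-profile} and \eqref{e:convert}, identifies the family
$$
(V^{\d_\zeta\uu}(0),\,V^{\d_{\bM_1}\uu}(0),\,\cdots,\,V^{\d_{\bM_d}\uu}(0),\,V^{\d_E\uu}(0))
$$
as a basis of the generalized kernel of $S_0(\uup,0)-\Id$; here $\d_\zeta\uu$ spans the genuine kernel, each parameter derivative satisfies $(S_0(\uup,0)-\Id)V^{\d_a\uu}(0)=\uup\,\d_a\omega\,V^{\d_\zeta\uu}(0)$, and the wavenumber direction is instead tied to the Bloch parameter $\xi$ through \eqref{e:k-S}. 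The first task is to exhibit the dual basis
$$
(\uk V^{\uad}(0),\,\uk V^{e_1}(0),\,\cdots,\,\uk V^{e_d}(0),\,\uk V^{\bfdelta}(0))\,,
$$
in which $e_1,\cdots,e_d$ are the constant functions (lying in $\ker\cL^*$ because $\bJ$ is conservative) and the genuinely new element is the variational vector $\bfdelta=\bfdelta_{\uk}(\cH)[\uu]\in\ker\cL^*$ encoding conservation of energy. Duality follows, as in the earlier cases, from the time-invariance of the pairings $\langle V^w(t),V^v(t)\rangle$ granted by \eqref{e:dual-convert} whenever $w\in\ker\cL^*$ and $v\in\ker\cL$, supplemented by the identities $\langle\bfdelta,\d_E\uu\rangle=1$ and $\langle\bfdelta,\d_{\bM_j}\uu\rangle=\langle\bfdelta,\d_\zeta\uu\rangle=0$ read off from \eqref{e:d-Ham}; taking $\uad$ in the generalized (rank-two) kernel of $\cL^*$ and orthogonalizing it against the $\d_{\bM_j}\uu$ and $\d_E\uu$ absorbs the one pairing that is not already time-invariant.

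I would then continue both bases analytically in $\xi$ by Kato's method \cite{Kato} to dual bases $(q_0(\xi),\cdots,q_{d+1}(\xi))$ and $(\tq_0(\xi),\cdots,\tq_{d+1}(\xi))$ of the total eigenspaces of $S_\xi(\uup,0)$ and its adjoint attached to the spectrum in $B(1,\epsilon_0)$, and form the reduced $(d+2)\times(d+2)$ matrix $\Omega_\xi=[\langle\tq_\alpha(\xi),(S_\xi(\uup,0)-\Id)q_\beta(\xi)\rangle]$. As in the mixed case $\Omega_0$ carries a single nonzero row, $(\Omega_0)_{0,\beta}=\uup\,\d_{p_\beta}\omega$ with $p_\beta\in\{\bM_1,\cdots,\bM_d,E\}$, so $1$ is a defective eigenvalue and a desingularization is required: setting $\tilde\Omega_\xi=(\iD\uk\xi)^{-1}\Sigma_\xi^{-1}\Omega_\xi\Sigma_\xi$ with $\Sigma_\xi=\diag((\iD\uk\xi)^{-1},1,\cdots,1)$ produces, thanks to the vanishing of $(\Omega_0)_{\alpha,0}$ and of $(\Omega^{(1)})_{\alpha,0}$ for $\alpha\geq1$, a matrix depending analytically on $\xi$. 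Since $\Omega_\xi$ and $\iD\uk\xi\,\tilde\Omega_\xi$ are similar, the $d+2$ eigenvalues of $\tilde\Omega_\xi$ depend continuously on $\xi$ and yield, through $\lambda_\alpha(\xi)-1\in\iD\uk\xi\,\sigma(\tilde\Omega_\xi)$, the continuous curves $\lambda_\alpha$ and the expansion \eqref{eig_expand_HAM}, provided one identifies $\tilde\Omega_0$ with $\uup$ times the Jacobian at $(\uk,\ubM,\uE)$ of the flux $(\omega,\bB F,S)$ of \eqref{WhithamHAM}.

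The heart of the proof is thus the computation of $\tilde\Omega_0$. After the same $\cO(\xi)$ adjustment of $q_0$ and the $\tq_\alpha$ that enforces $\d_\xi q_0(0)=\iD\uk\,V^{\d_k\uu}(0)$ — justified via \eqref{e:k-S} and invariance of the reduced subspaces — the top row reproduces $\uup(\d_k\omega,\d_{\bM_1}\omega,\cdots,\d_{\bM_d}\omega,\d_E\omega)$ just as before, and, writing $\Omega_\xi=\Omega_0+\iD\uk\xi\,\Omega^{(1)}+(\iD\uk\xi)^2\Omega^{(2)}+\cO(|\xi|^3)$, the rows indexed by $\bM$ reproduce the block $\uup\,\bB\,\d F$ by the integration-by-parts manipulation already used in Theorem~\ref{propMIX}, via \eqref{e:expand-convert} and $(\cL^{(1)}-\uc)\d_\zeta\uu=\d_k\omega\,\d_\zeta\uu-\cL\d_k\uu$, the factor $\bB$ being inherited from $\bJ=\bD\bB$. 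The one new computation is the energy row, obtained by pairing against $\uk V^{\bfdelta}$: into the entries $(\Omega^{(1)})_{d+1,\beta}$ and $(\Omega^{(2)})_{d+1,0}$ I would feed the two Hamiltonian identities \eqref{e:d-Ham} and \eqref{e:d-Hamflux}, the former turning $\bfdelta\cdot\d_a\uu$ into $\d_a$ of the averaged energy, the latter turning $\bfdelta\cdot((\cL^{(1)}-\uc)\d_a\uu+\cL^{(2)}(\d_\zeta\uu)\d_ak)$ into $\d_a$ of the averaged energy flux $S$ plus a compensating $\d_a\omega$ term, yielding the row $\uup(\d_k S,\d_{\bM_1}S,\cdots,\d_{\bM_d}S,\d_E S)$.

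I expect the single real obstacle to be the bookkeeping of this energy row. The identities \eqref{e:d-Ham}--\eqref{e:d-Hamflux} carry several mean-free remainders and a phase-type $\d_a\omega$ contribution whose disappearance relies on the law of conservation of waves $\d_Tk_0=\d_X\omega$ and on the orthogonality normalizations of $\uad$ and $\bfdelta$; in particular the $\d_a\omega$ terms produced by \eqref{e:d-Hamflux} must cancel exactly against those generated when the kernels of $S^{(1)}$ and $S^{(2)}$ are integrated by parts, precisely as in the $(\Omega^{(2)})_{\alpha,0}$ computation of Theorem~\ref{propMIX}. Once these cancellations are checked, $\tilde\Omega_0$ equals $\uup$ times the Jacobian of the flux of \eqref{WhithamHAM} at $(\uk,\ubM,\uE)$, whose spectrum is $\uup\,\{a_1,\cdots,a_{d+2}\}$; the similarity $\Omega_\xi\sim\iD\uk\xi\,\tilde\Omega_\xi$ then delivers \eqref{eig_expand_HAM}, completing the proof.
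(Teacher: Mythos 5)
Your proposal follows the paper's own proof essentially verbatim: the same enlarged dual bases with $\uk V^{\bfdelta_{\uk}(\cH)[\uu]}$ adjoined for the energy direction, the same Kato continuation and $\Sigma_\xi$-desingularization inherited from Theorem~\ref{propMIX}, and the same use of \eqref{e:d-Ham} and \eqref{e:d-Hamflux} to identify the energy row of $\tilde\Omega_0$ with $\uup\,\dD S$, including the cancellation of the $\d_a\omega$ contributions against the boundary terms from integrating $S^{(1)}$ and $S^{(2)}$ by parts. The approach and all key ingredients coincide with the paper's argument, so nothing further is needed.
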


%TODO: discuss consequences on stability/instability

\begin{proof}
As in the previous subsection, perturbatively we may obtain dual bases $(q_0(\xi),\cdots,q_{d+1}(\xi))$ and $(\tq_0(\xi),\cdots,\tq_{d+1}(\xi))$ of the generalized eigenspaces of $S_\xi(1/\omega^{\uk,\ubM,\uE},0)$ and $S_\xi(1/\omega^{\uk,\ubM,\uE},0)^*$ associated with their spectra in $B(1,\epsilon_0)$ as long as $|\xi|\leq \xi_0$ provided $\xi_0$ and $\epsilon_0$ are small enough. Here however we start from 
$$(q_0(0),\cdots,q_{d+1}(0))\ =\ (V^{\d_\zeta\uu},V^{\d_{\bM_1}\uu},\cdots,V^{\d_{\bM_d}\uu},V^{\d_E\uu})$$ and
$$(\tq_0(0),\cdots,\tq_{d+1}(0))\ =\ (\uk V^{\uad},\uk V^{e_1},\cdots,\uk V^{e_d},\uk V^{\delta_k\cH [\uu]})\,,$$
where $\uad$ belongs to the generalized kernel of $\cL^*$, is orthogonal to $\d_{\bM_1}\uu,\cdots,\d_{\bM_{d}}\uu,\d_E\uu$ and satisfies $\langle\uad,\d_\zeta\uu\rangle=1$. Then, for $|\xi|\leq\xi_0$, the spectrum of $S_\xi(1/\omega^{\uk,\ubM,\uE},0)-\Id$ in $B(0,\epsilon_0)$ is the spectrum of the matrix
$$
\Omega_\xi\ =\ \left[\left\langle\tq_\alpha(\xi), \left(S_\xi(1/\omega^{\uk,\ubM,\uE},0)-\Id\right)q_\beta(\xi)\right\rangle\right]_{0\leq\alpha,\beta\leq d+1}\,.
$$

The cancellation discussed in the previous subsection shows that the matrix $\tilde\Omega_\xi$ defined by
$$
\tilde\Omega_\xi\ :=\ \frac{1}{\iD\uk\xi}\Sigma_\xi^{-1}\Omega_\xi\Sigma_\xi
\qquad\textrm{with}\qquad
\Sigma_\xi\ =\ 
\begin{pmatrix}(\iD\uk\xi)^{-1}&O_{1\times d+1}\\
0_{d+1\times 1}&\I_{d+1\times d+1}\end{pmatrix}
$$
depends analytically on $\xi$. The point is again to connect $\tilde\Omega_0$ with our averaged system, here~\eqref{WhithamHAM}. Manipulations as above already yield
$$
\tilde\Omega_0\ =\ \uup\,\left(\begin{array}{c|ccc|c}
\d_k\omega&\d_{\bM_1}\omega&\cdots&\d_{\bM_{d}}\omega&\d_{E}\omega\\[0.5em]\hline &&&&\\[-0.5em]
-\bB\d_k F&-\bB\d_{\bM_1} F&\cdots&-\bB\d_{\bM_{d}} F&-\bB\d_{E} F
\\[0.5em]\hline &&&&\\[-0.5em]
\frac1\uup(\Omega^{(2)})_{d+1,0}&\frac1\uup(\Omega^{(1)})_{d+1,1}&\cdots&\frac1\uup(\Omega^{(1)})_{d+1,d}&\frac1\uup(\Omega^{(1)})_{d+1,d+1}
\end{array}\right)\,,
$$
where $\uup=1/\omega^{\uk,\ubM,\uE}$, for $\alpha=1,\cdots,d+1$,
$$
\begin{array}{rcl}
(\Omega^{(2)})_{d+1,0}
&=&\displaystyle
\tfrac{1}{(\iD\uk)^2}\langle \d_\xi\tq_{d+1}(0),\left(S_0(\uup,0)-\Id\right)\d_\xi q_0(0)\rangle
\,+\,\tfrac{1}{\iD\uk^2}\langle \tq_{d+1}(0),S^{(1)}(\uup,0) \d_\xi q_0(0)\rangle\\[1em]
&&\displaystyle
+\,\tfrac{1}{\iD\uk^2}\langle \d_\xi\tq_{d+1}(0),S^{(1)}(\uup,0)\,q_0(0)\rangle
\,+\,\tfrac{1}{\uk^2}\langle \tq_{d+1}(0),S^{(2)}(\uup,0)\,q_0(0)\rangle\,,
\end{array}
$$
and, for $\beta=1,\cdots,d+1$,
$$
(\Omega^{(1)})_{d+1,\beta}\ =\ 
\tfrac{1}{\iD\uk}\langle \d_\xi\tq_{d+1}(0),\left(S_0(\uup,0)-\Id\right)q_\beta(0)\rangle
\,+\,\tfrac{1}{\uk}\langle \tq_{d+1}(0),S^{(1)}(\uup,0) q_\beta(0)\rangle\,.
$$
Moreover we may enforce $\d_\xi q_0(0)=\iD\uk\d_k\uu$.

For $\beta=1,\cdots,d+1$, setting 
$$
a_\beta\ =\ \begin{cases}\bM_\beta&\textrm{ if }1\leq\beta\leq d\\E&\textrm{ if }\beta=d+1\end{cases}
$$
and proceeding as in the previous subsection we derive
$$
\begin{array}{rcl}
(\Omega^{(1)})_{d+1,\beta}
&=&\displaystyle
\tfrac{\uup}{\iD\uk}\,\d_{a_\beta}\omega\ \langle \d_\xi\tq_{d+1}(0),q_0(0)\rangle
\ +\ \uup\ \langle \delta_k\cH [\uu],(\cL^{(1)}-\uc)\d_{a_\beta}\uu\rangle\\[1em]
&&\displaystyle
+\ \uk\,\d_{a_\beta}\omega\int_0^\uup t\,\langle V^{\delta_k\cH [\uu]}(t),\left(\tfrac{\dD}{\dD t}-A_0(t)\right)V^{\d_k\uu}(t)]\rangle\ \dD t\,.
\end{array}
$$
Now using \eqref{e:d-Ham} and \eqref{e:d-Hamflux} we also obtain
$$
\uup\ \langle \delta_k\cH [\uu],(\cL^{(1)}-\uc)\d_{a_\beta}\uu\rangle
\ =\ 
\uup\,\d_{a_\beta}S\ +\ \uk\,\d_{a_\beta}\omega\int_0^\uup \langle V^{\delta_k\cH [\uu]}(t),V^{\d_k\uu}(t)]\rangle\ \dD t\,.
$$
From this stems as expected
$$
(\Omega^{(1)})_{d+1,\beta}\ =\ 
\tfrac{\uup}{\iD\uk}\,\d_{a_\beta}\omega\ \left[\langle \d_\xi\tq_{d+1}(0),q_0(0)\rangle
\,+\,\langle \tq_{d+1}(0),\d_\xi q_0(0)\rangle\right]
\,+\,\uup\,\d_{a_\beta}S\ =\ \uup\,\d_{a_\beta}S\,.
$$

Lastly, on one hand, we still have
$$
\begin{array}{rcl}
\tfrac{1}{\iD\uk^2}\langle \d_\xi\tq_{d+1}(0)\,,\hspace{-1em}&\hspace{-1em}&\hspace{-1em}
S^{(1)}(\uup,0)\,q_0(0)\rangle\\[1em]
&=&\displaystyle
\tfrac{1}{\iD\uk}\left[\uup\,\d_k\omega\,\langle \d_\xi\tq_{d+1}(0),q_0(0)\rangle
-\tfrac{1}{\iD\uk}\langle\d_\xi\tq_{d+1}(0),\left(S_0(\uup,0)-\Id\right)\d_\xi q_0(0)\rangle\right]\,.
\end{array}
$$
On the other hand,
$$
\begin{array}{rcl}
\tfrac{1}{\uk^2}\hspace{-1em}&\hspace{-1em}&\hspace{-1em}\langle \tq_\alpha(0),[\tfrac{1}{\iD}S^{(1)}(\uup,0)\d_\xi q_0(0)
+S^{(2)}(\uup,0)\,q_0(0)]\rangle\\[1em]
&=&\displaystyle
\int_0^\uup \langle  V^{e_\alpha}(t),A^{(1)}(t)\left[V^{\d_k\uu}(t)+t\d_k\omega\,V^{\d_\zeta\uu}(t)\right]\rangle\,\dD t
\,+\,\tfrac{1}{\uk}\int_0^\uup \langle  V^{e_\alpha}(t),A^{(2)}(t)\,V^{\d_\zeta\uu}(t)\rangle\,\dD t\\[1em]
&=&\displaystyle
\uup\,\langle  e_\alpha,(\cL^{(1)}-\uc)\d_k\uu+\cL^{(2)}\d_\zeta\uu\rangle
\,+\,\uk\,\d_k\omega\,\int_0^\uup t\,\langle V^{\delta_k\cH [\uu]}(t),\left(\tfrac{\dD}{\dD t}-A_0(t)\right)V^{\d_k\uu}(t)]\rangle\ \dD t\\[1em]
&=&\displaystyle
\uup\,\d_kS
\,+\,\uup\,\d_k\omega\ \tfrac{1}{\iD\uk}\langle \tq_\alpha(0),\d_\xi q_0(0)\rangle\,.
\end{array}
$$
Hence as expected $(\Omega^{(2)})_{d+1,0}\ =\ \uup\,\d_kS$.

This achieves the proof of
$$
\tilde\Omega_0\ =\ \uup\,\left(\begin{array}{c|ccc|c}
\d_k\omega&\d_{\bM_1}\omega&\cdots&\d_{\bM_{d}}\omega&\d_{E}\omega\\[0.5em]\hline &&&&\\[-0.5em]
-\bB\d_k F&-\bB\d_{\bM_1} F&\cdots&-\bB\d_{\bM_{d}} F&-\bB\d_{E} F
\\[0.5em]\hline &&&&\\[-0.5em]
\d_k S&\d_{\bM_1} S&\cdots&\d_{\bM_d}S&\d_{E}S
\end{array}\right)
$$
that yields the theorem.
\end{proof}

%%%%%%%%%%%%%%%%%%%%%%%%%%%%%%%%%%%%%%%%%%%%%%%%%%%%%%
%           Conclusions and Remarks                  %
%%%%%%%%%%%%%%%%%%%%%%%%%%%%%%%%%%%%%%%%%%%%%%%%%%%%%%
\section{Conclusions and Remarks}

We have proved that the slow modulation \emph{ansatz} captures accurately the essential spectral features of the space-time low\footnote{In the sense of \emph{close to $1$}.} Floquet-multiplier evolution linearized about a periodic wave whose period belongs to the lattice. As already mentioned this opens at least three classes of questions.

In many cases one also expects that in the large-time the dynamics is effectively reduced to a slow modulation evolution. Our analysis provides the spectral background --- including, as appears from our proofs, expansion of critical eigenfunctions --- for a proof of such claims, either in the linear regime as, for continuous systems, in \cite{R_linKdV} for the Korteweg--de Vries equation, or in the complete nonlinear regime, as in \cite{JNRZ-conservation} for general parabolic systems of partial differential equations.

With this in mind, one expects to be able to identify on averaged systems the key features of the discrete dynamics. It is then natural to ask if one may design discrete systems leading to desired modulation properties, either with applications to the analysis of numerical schemes where the goal is to preserve the slow modulation asymptotics at the discrete level, or in the effective conception of smart materials, a fast-growing field of investigation.

At last, even for the direct problem at the spectral level, remains the technical challenge of dealing with waves of \emph{general} period, a seemingly quasi-periodic problem.

\medskip
{\bf Acknowledgement}: The main part of this work was carried out during the $3$-months stay of B.K. in Lyon on 2013. B.K. would like to thank Sylvie Benzoni-Gavage for the kind invitation that was at the origin of this stay, and acknowledge the hospitality of Institut Camille Jordan at the Universit\'e Lyon 1 and the financial support of the CMIRA ACCUEIL DOC program of the region Rh\^one-Alpes.

%%%%%%%%%%%%%%%%%%%%%%%%%%%%%%%%%%%%%%%%%%%%%%%%%%%%%% 
%        bibliography                                %
%%%%%%%%%%%%%%%%%%%%%%%%%%%%%%%%%%%%%%%%%%%%%%%%%%%%%%

\bibliographystyle{alpha}
\bibliography{Ref} 

\end{document}